\def\@seccntformat#1{\csname the#1\endcsname.\quad}
\newcommand{\OJO}{\textbf{???}\ }
\def\R{\mathbb{R}}
\def\0{\mathbf{0}}
\def\RR{\mathbb{R}^2}
\def\SS{\mathbb{S}^2}
\def\TT{\mathbb{T}^2}
\newcommand{\diam}{\mathop{\rm diam }\nolimits}
\newcommand{\Bd}{\mathop{\rm Bd}\nolimits}
\newcommand{\Inte}{\mathop{\rm Int}\nolimits}
\newcommand{\Cl}{\mathop{\rm Cl}\nolimits}
\newcommand{\Sing}{\mathop{\rm Sing}\nolimits}
\definecolor{beren}{rgb}{0.4,0.04,0.28}
\definecolor{rojo}{rgb}{0.5,0,0}
\definecolor{verde}{rgb}{0,0.4,0}
\definecolor{granate}{RGB}{130,24,130}
\definecolor{azuloscuro}{rgb}{0,0,0.8}
\definecolor{naranja}{RGB}{255,96,0}
\definecolor{marron}{rgb}{0,0.4,0}
\newcommand{\Fizq}{\LARGE $\boldsymbol{\leftleftarrows}$}
\newcommand{\Fder}{\LARGE $\boldsymbol{\rightrightarrows}$}
\newcommand{\Farr}{\LARGE $\boldsymbol{\upuparrows}$}
\newcommand{\Faba}{\LARGE $\boldsymbol{\downdownarrows}$}
\newcommand{\fizq}{\large $\boldsymbol{\leftleftarrows}$}
\newcommand{\fder}{\large $\boldsymbol{\rightrightarrows}$}
\newcommand{\farr}{\large $\boldsymbol{\upuparrows}$}
\newcommand{\faba}{\large $\boldsymbol{\downdownarrows}$}
\newcommand{\SWNE}{$\boldsymbol{\nearrow}$}
\newcommand{\NWSE}{$\boldsymbol{\searrow}$}
\newcommand{\SENW}{$\boldsymbol{\nwarrow}$}
\newcommand{\NESW}{$\boldsymbol{\swarrow}$}
\newcommand{\swne}{\tiny $\boldsymbol{\nearrow}$}
\newcommand{\nwse}{\tiny $\boldsymbol{\searrow}$}
\newcommand{\senw}{\tiny $\boldsymbol{\nwarrow}$}
\newcommand{\nesw}{\tiny $\boldsymbol{\swarrow}$}
\newcounter{letra}
\renewcommand{\theletra}{\Alph{letra}}
\newtheorem{theorem}{Theorem}[section]
\newtheorem{proposition}[theorem]{Proposition}
\newtheorem{corollary}[theorem]{Corollary}
\theoremstyle{definition}
\newtheorem{definition}[theorem]{Definition}
\theoremstyle{remark}
\newtheorem{remark}[theorem]{Remark}
\theoremstyle{plain}
\newcounter{versionfinal}
\newcommand{\josegines}[1]{}
\newcommand{\corregidooriginal}[2]{
%
%
}
\newcommand{\borrar}[1]{
}
\newcommand{\morado}[1]{\textcolor{black}{#1}}
\newcommand{\moradomucho}{\color{black}}
\newcommand{\negromucho}{\color{black}}
\newcommand{\apendices}[1]{
}
\renewcommand{\poneralfinal}{}
\newcommand{\josegines}[1]{\renewcommand{\thefootnote}{\bfseries\color{red}\arabic{footnote}}\footnote{\textcolor{red}{\textbf{Nota de Jose Gines: } #1}}\renewcommand{\thefootnote}{\arabic{footnote}}}
\newcommand{\victor}[1]{\renewcommand{\thefootnote}{\bfseries\color{blue!50!black}\arabic{footnote}}\footnote{\textcolor{blue!50!black}{\textbf{Nota de Victor: } #1}}\renewcommand{\thefootnote}{\arabic{footnote}}}
\newcommand{\corregidooriginal}[2]{#1

{\bfseries \color{red} #2}

}
\newcommand{\borrar}[1]{
}
\newcommand{\morado}[1]{\textcolor{purple}{#1}}
\newcommand{\azul}[1]{\textcolor{blue}{#1}}
\newcommand{\moradomucho}{\color{purple}}
\newcommand{\negromucho}{\color{black}}
\newcommand{\apendices}[1]{#1}
\begin{document}

\title{On the Markus-Neumann theorem}
\author{Jos\'e Gin\'es Esp\'in Buend\'ia and V\'ictor Jim\'enez
Lop\'ez}
\date{\normalsize{Universidad de Murcia (Spain)}\\
\normalsize{\today}}
\maketitle

\begin{abstract}
A well-known result by L. Markus \cite{Ma}, later extended by D.
A. Neumann \cite{Ne}, states that two continuous flows on a
surface are equivalent if and only if there is a surface
homeomorphism preserving orbits and time directions of their
separatrix configurations.

In this paper we present several examples showing that, as
originally formulated, the Markus-Neumann theorem needs not work.
Besides, we point out the gap in its proof and show how to restate
it in a correct (and slightly more general) way.
\end{abstract}

\section{The Markus-Neumann theorem}

The Markus-Neumann theorem is an often cited result dealing with
the topological classification of surface flows.  Google Scholar
provides 147 explicit references to \cite{Ma}, and 91 to
\cite{Ne}, and has been used without explicit mention (mainly in
the planar case), as in \cite[p. 5]{Re}, a large number of times.
A first planar version was proved by L. Markus \cite{Ma}, under
the additional restriction of nonexistence of so-called ``limit
separatrices''. In \cite{Ne}, D. A. Neumann disposed of this
condition and extended the result to arbitrary surfaces. Roughly
speaking, the theorem states that two surface flows are equivalent
if there is a surface homeomorphism preserving a number of
distinguished orbits from both flows. However, Markus missed an
important point which, apparently, also passed unnoticed to
Neumann and the subsequent readers (see for instance \cite[pp.
33--34]{DLA}, \cite[p. 294]{Pe}, \cite[pp. 245--246]{NZ} or
\cite[pp. 225--226]{Ni}). As a consequence the theorem, as stated
in \cite{Ma,Ne}, is wrong. In fact, as we will show in the next
section, counterexamples can be found in far from pathological
settings, even for polynomial plane flows. The good news is that,
after appropriately amending the Markus-Neumann notion of
separatrix, the theorem works (and can be slightly improved). We
should stress that, when using the theorem in the polynomial
scenery, researchers typically employ an alternative, easier to
handle with, notion of separatrix (see Remark~\ref{menosmal}).
Fortunately enough, it turns out to be equivalent to our amended
definition (but not to that of Markus-Neumann'!). Therefore, all
such papers remain correct without further changes.

In what follows we list a number of notions that will be needed to
state the Markus-Neumann theorem (Theorem~\ref{mainth} below) or
in the next sections. The reader is assumed to be familiar with
the basic facts of the qualitative theory of bidimensional flows
(although some of them will be recalled below). Among others, a
good general reference on the subject is the book \cite{ABZ}.

Let $\Phi: \R \times M \to M$ be a continuous flow on a
(connected, without boundary) surface $M$. Note that $M$ is not
assumed to be compact nor orientable. We sometimes refers to is a
the couple $(M,\Phi)$.

Some specific flows will be mentioned below. Let $f_s$, $f_a$, and
$f_r$ be the planar vector fields given by $f_s(x,y)=(1,0)$,
$f_a(x,y)=(-y,x)$ and $f_r(x,y)=(-x,-y)$,  and associate to them
the corresponding planar flows $\Phi_s, \Phi_a$ and $\Phi_r$.
Also, let $f_{v_1}(x,y)= (|x|,0)$, $f_{v_2}(x,y)=(x,0)$,
$f_{v_3}(x,y)=(-x,0)$, $f_{h_1}(x,y)=(|y|,0)$ and
$f_{h_2}(x,y)=(y,0)$, being $\Phi_{v_1}$, $\Phi_{v_2}$,
$\Phi_{v_3}$, $\Phi_{h_1}$ and $\Phi_{h_2}$, respectively, their
associated planar flows.  Finally, after identifying points $(x,y)$ and
$(x',y')$ in $\R^2$ when both $x-x'$ and $y-y'$ are integers,
$f_s$ induces a flow $\Phi_{ss}$ on the torus $\TT$.

Given any $p \in M$ and any $t \in \R$, we write indistinguishably
$\Phi(t,p)=\Phi_p(t)=\Phi_t(p)$. Also, for every $p \in M$, we
call the map $\Phi_p: \R \to M$  \textit{the integral curve
associated to the point $p$}, and its image,
$\varphi_\Phi(p)=\{\Phi_p(t) : t \in \R\}$,  \textit{the orbit of
$\Phi$ through $p$}. The flow foliates $M$ as a union of points
(the singleton orbits, or \textit{singular} points of the flow),
circles (that is, homeomorphic sets to the unit circle
$\{(x,y):x^2+y^2=1\}$, corresponding to non-constant periodic
integral curves) and injective copies of the real line. By
$\Sing(\Phi)$ we denote the union set of all singular points;
nonsingular points, and the orbits containing them, are called
\textit{regular}. If $I\subset \R$ is an interval and $p \in
\Omega$, we call $\Phi_{p}(I)$ a \emph{semi-orbit} of
$\varphi_\Phi(p)$. In the particular cases $I=[t_1,t_2]$ (with
$\Phi_p(t_1)=p_1$, $\Phi_p(t_2)=p_2$), $I=[0,\infty)$ or
$I=(-\infty,0]$, we rewrite $\Phi_p(I)$ as
$\varphi_\Phi(p_1,p_2)$, $\varphi_\Phi(p,+)$ or
$\varphi_\Phi(-,p)$, respectively.

We define the \emph{$\alpha$-limit set} of the orbit
$\varphi_\Phi(p)$ (or the point $p$) as the set
 $$\alpha_\Phi(p)=\{u\in M: \exists t_n \to -\infty;
  \Phi_p(t_n)\to u\};
 $$
the \emph{$\omega$-limit set}, $\omega_\Phi(p)$, is accordingly
defined writing $t_n \to \infty$ instead. Also, we write
$\alpha_\Phi'(p)=\alpha_\Phi(p)\setminus \varphi_\Phi(p)$ and
$\omega_\Phi'(p)=\omega_\Phi(p)\setminus \varphi_\Phi(p)$. When
these sets coincide, that is, the orbit belongs neither to its
$\alpha$-limit set not its $\omega$-limit set, we call it
\emph{non-recurrent}. Usually, when there is no ambiguity on
$\Phi$, we will omit to mention $\Phi$ and write, for instance,
$\varphi(p)$ instead of $\varphi_\Phi(p)$ or $\omega(p)$ instead
of $\omega_\Phi(p)$.

We say that an orbit $\varphi(p)$  is \emph{stable} if for any
$\epsilon>0$ there is a number $\delta>0$ such that
$d(p,q)<\delta$ implies that all points from $\varphi(q)$ stay at
a distance less than $\epsilon$ from $\varphi(p)$, and it is
called \emph{unstable} otherwise. Contrary to the previous
notions, stability depends on the chosen metric; more precisely,
problems may arise when $M$ is noncompact. To avoid them we pass
to the one-point compactification $M_\infty=M\cup \{\infty\}$,
which is metrizable because $M$ is locally compact and metrizable,
and use an arbitrary distance in $M_\infty$.

If $(M, \Phi)$ is a flow, then we say that a subset $\Omega
\subset M$ is \textit{invariant} for the flow if it is the union
set of some orbits. In such a case $\Phi$ also defines a flow on
$\Omega$ ($\Phi$ can be restricted to a map from $\R \times
\Omega$ to $\Omega$), the so-called \textit{restriction of $\Phi$
to $\Omega$}, and we write $(\Omega, \Phi)$ to denote it.

We say that two flows $(M_1, \Phi_1)$ and $(M_2, \Phi_2)$ are
\textit{locally topologically equivalent} at the points $p_1\in
M_1$, $p_2\in M_2$, if there is a homeomorphism $h:U_1\to U_2$
between open neighbourhoods of $p_1$ and $p_2$, with $h(p_1)=p_2$,
carrying semi-orbits onto semi-orbits and preserving (time)
directions. When the homeomorphism maps the whole $M_1$ onto $M_2$
(hence carrying orbits onto orbits), then we call it a
\textit{topological equivalence} between $(M_1, \Phi_1)$ and
$(M_2, \Phi_2)$ and say that the flows $(M_1, \Phi_1)$ and $(M_2,
\Phi_2)$ are \emph{topologically equivalent}. As it is well known,
given a flow $(M,\Phi)$ there is a flow $(M_\infty,\Phi_\infty)$,
having $\infty$ as a singular point, whose restriction
$(M,\Phi_\infty)$ is topologically equivalent to $(M,\Phi)$ (in
fact, the orbits are the same).

Let $p$ be a singular point of $\Phi$. We say that it is
\emph{vertical} (respectively, \emph{horizontal}) if there is a
local equivalence between $\Phi$ and either $\Phi_{v_1}$,
$\Phi_{v_2}$ or $\Phi_{v_3}$ (respectively, $\Phi_{h_1}$ or
$\Phi_{h_2}$) at $p$ and $\0$.  A singular point which is neither
vertical, nor horizontal, is called \emph{essential}. Among the
essential singular points we distinguish the subset of
\textit{trivial} ones as those points which admit a neighbourhood
of singular points.

Let $p\in M$. If there is a local topological equivalence between
$\Phi$ and $\Phi_s$ at $p$ and $\0$, with corresponding
homeomorphism $h:U_1\to U_2=(-\epsilon,\epsilon)\times
(-\epsilon,\epsilon)$, then we say that $U_1$ is a \textit{tubular
neighbourhood} of $p$, and call the regions (by a \emph{region} we
mean an open connected set) into which
$h^{-1}((-\epsilon,\epsilon)\times \{0\})$ decomposes $U_1$ a
\emph{couple of lateral tubular regions} at $p$. Also, we say that
$h^{-1}(\{0\}\times (-\epsilon,\epsilon))$ is a \emph{transversal}
to $p$, which is decomposed by $p$ into a \emph{couple of lateral
transversals} at $p$. It is a well-known fact that every regular
point $p$ of $\Phi$ admits a tubular neighbourhood; on the other
hand, if $p$ is a horizontal singular point, couples of lateral
tubular regions and of lateral transversals at $p$ can be
analogously defined, just using $\Phi_{h_1}$ or $\Phi_{h_2}$
instead of $\Phi_s$. If $\mu:\R\to M$ is a continuous injective
map with the property that, for any $s\in \R$, there is
$\epsilon_s>0$ such that $\mu((s-\epsilon_s,s+\epsilon_s))$ is a
transversal to $\mu(s)$, then we call $\mu(\R)$ a
\emph{transversal} to the flow $(M,\Phi)$.

Let $\Omega$ be an invariant region for $(M,\Phi)$. Following
\cite{Ne}, we say that $\Omega$ is is \textit{parallel} when the
restriction $(\Omega, \Phi)$  is topologically equivalent to
either $(\RR, \Phi_s)$, $(\RR\setminus\{\0\}, \Phi_a)$,
$(\RR\setminus\{\0\}, \Phi_r)$ or $(\TT, \Phi_{ss})$, and use the
terms \textit{strip}, \textit{annular}, \textit{radial} and
\textit{toral}, respectively, to distinguish cases. Note that in
the toral case $\Omega=M$ is indeed a torus and all orbits are
periodic.

If $\Omega$ is parallel, and $T\subset \Omega$ is a transversal,
then we say that it is a \emph{complete} transversal to $\Omega$
provided that one the following conditions hold:
\begin{itemize}
 \item $\Omega$ is either a strip or an annular region,
 and $T$ intersects each orbit from  $\Omega$
 at exactly one point. Observe that, in the strip case,
 $T$ decomposes $\Omega$ into two regions, $\Omega_T^-$ and
 $\Omega_T^+$, corresponding to the backward and forward direction
 of the flow.
 \item $\Omega$ is a radial region and, if $p\in T$, then each of
 the two transversals into which $p$ decomposes $T$ intersects any orbit from
 $\Omega$ infinitely many times.
\end{itemize}
Also, we say that a transversal $T\subset \Omega$ is
\emph{semi-complete} when either it is complete, or 
it is one of the two transversals into which some point decomposes 
a complete 
transversal.


Let $(M, \Phi)$ be a flow and let $p \in M$. In \cite{Ne} or
\cite{Ma} separatrices are defined as follows:

\begin{definition}
 \label{bad}
 We say that an orbit $\varphi(p)$ of $(M, \Phi)$
 is \emph{ordinary} if it is neighboured by a parallel region
 $\Omega$ such that:
 \begin{itemize}
 \item[(i)] $\alpha'(q)=\alpha'(p)$ and
   $\omega'(q)=\omega'(p)$ for any $q\in \Omega$;
 \item[(ii)] $\Bd\Omega$ is the union of
   $\alpha'(p)$, $\omega'(p)$ and
   exactly two orbits $\varphi(a)$ and  $\varphi(b)$
   with $\alpha'(a)=\alpha'(b)=\alpha'(p)$ and
   $\omega'(a)=\omega'(b)=\omega'(p)$.
 \end{itemize}
 If an orbit is not ordinary, then it is called a
 \emph{separatrix}.
\end{definition}
Observe that no conditions are imposed on $\alpha'(p)$ and
$\omega'(p)$: one or both may be empty (which is to say the
infinite point $\infty$ when passing to $\Phi_\infty$), and they
may have, or not, empty intersection. On the other hand,
$\varphi(a)$ and $\varphi(b)$ must be distinct and disjoint from
$\alpha'(p)$ and $\omega'(p)$.

Let $(M,\Phi)$ be a flow and call $\mathcal{S}$ the union set of
all its separatrices. Note that the set $\mathcal{S}$ is closed.
The components of $M \setminus \mathcal{S}$ are called the
\textit{canonical regions} of $(M, \Phi)$. By a \textit{separatrix
configuration} for $(M,\Phi)$, $\mathcal{S}^+$, we mean the union
of $\mathcal{S}$ together with a representative orbit from each
canonical region.

Let $\Phi_1$ and $\Phi_2$ be two flows defined on the same surface
$M$ and let $\mathcal{S}_1$ and $\mathcal{S}_2$ be, respectively,
the union sets of their separatrices. We say that their separatrix
configurations $\mathcal{S}_1^+$ and $\mathcal{S}_2^+$ are
\textit{equivalent} if there is a homeomorphism of $M$ onto $M$,
carrying orbits of $\mathcal{S}_1^+$  onto orbits of
$\mathcal{S}_2^+$, that preserves directions.

We are ready to state the result we are concerned with in this
paper:

\begin{theorem}[the Markus-Neumann theorem \cite{Ma,Ne}]
\label{mainth}
 Let $M$ be a surface and suppose that
$\Phi_1$ and $\Phi_2$ are flows on $M$ whose sets of singular
points are discrete. Then $\Phi_1$ and $\Phi_2$ are equivalent if
and only if they have equivalent separatrix configurations.
\end{theorem}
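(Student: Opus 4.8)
The plan is to prove the two implications separately, with essentially all of the work concentrated in the reverse direction. First I would dispatch the easy direction. Suppose $\Phi_1$ and $\Phi_2$ are topologically equivalent via $h\colon M\to M$. Every ingredient of Definition~\ref{bad} — parallel region, the limit sets $\alpha'$ and $\omega'$, and the boundary decomposition in (ii) — is intrinsic to the flow, so it is preserved by $h$, which carries orbits to orbits respecting direction, hence invariant sets to invariant sets and parallel regions to parallel regions of the same type. Consequently $h$ maps ordinary orbits to ordinary orbits and separatrices to separatrices, so $h(\mathcal{S}_1)=\mathcal{S}_2$ and $h$ permutes the canonical regions. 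It need not send the chosen representative orbit of a canonical region $C$ to the chosen representative of $h(C)$; but since $h(C)$ is parallel, any two of its orbits are interchanged by a self-equivalence of $h(C)$ that fixes $\Bd(h(C))$ (e.g.\ a fibre-preserving homeomorphism of the relevant standard model), so I can correct $h$ region by region to obtain an equivalence of the separatrix configurations.

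For the converse, suppose $h\colon M\to M$ is an equivalence of separatrix configurations. Then $h$ maps $\mathcal{S}_1$ homeomorphically onto $\mathcal{S}_2$ carrying orbits, with their directions, to orbits, it permutes the canonical regions, and it sends the representative orbit of each $C$ to that of $h(C)$. The central structural input — where the discreteness of $\Sing(\Phi_i)$ enters — is that every canonical region is a parallel region, i.e.\ the restricted flow on each component of $M\setminus\mathcal{S}$ is topologically equivalent to one of the four models $(\RR,\Phi_s)$, $(\RR\setminus\{\0\},\Phi_a)$, $(\RR\setminus\{\0\},\Phi_r)$ or $(\TT,\Phi_{ss})$. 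Granting this, $C$ and $h(C)$ are both parallel, and the matched representative orbit together with the boundary identification $h|_{\Bd C}$ forces them to be of the same type.

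It then remains to upgrade $h$, which is already a flow equivalence on $\mathcal{S}_1$, to a genuine topological equivalence of $M$ by redefining it inside each canonical region. Fixing $C$, I would choose a complete transversal $T\subset C$ (or one of its halves, in the radial case), transport it to a complete transversal $T'\subset h(C)$ by a homeomorphism $h_T$ respecting the order inherited from the model and the position of the representative orbit, and then extend along the flow: writing each point of $C$ as $\Phi^1_t(q)$ for a unique $q\in T$ and $t\in\R$ (with the obvious modifications for the periodic coordinate in the annular and toral cases, and for the infinitely many crossings in the radial case), set $H\bigl(\Phi^1_t(q)\bigr)=\Phi^2_t\bigl(h_T(q)\bigr)$. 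This yields a topological equivalence $C\to h(C)$, and one defines $H$ to equal $h$ on $\mathcal{S}_1$.

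The hard part will be the gluing: verifying that the maps defined separately on $\mathcal{S}_1$ and on the individual canonical regions fit together into a single \emph{continuous} self-homeomorphism of $M$, i.e.\ that $H$ is continuous across $\mathcal{S}_1$. This reduces to showing that as an orbit of $C$ approaches a bounding separatrix or a bounding limit set, its $H$-image approaches the $h$-image of that boundary set compatibly. Controlling this limiting behaviour is precisely what conditions (i)--(ii) of Definition~\ref{bad} are designed to record — they are meant to guarantee that the way a parallel canonical region closes up along $\Bd C$ is determined by the separatrix configuration. I expect this to be the main obstacle, and indeed it is exactly here that the paper locates the gap: these conditions do not actually pin down the boundary behaviour, so the gluing step can fail, and the theorem as stated is not correct without amending the notion of separatrix.
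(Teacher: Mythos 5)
Your proposal cannot succeed, because the statement you set out to prove is false, and the paper never proves Theorem~\ref{mainth} --- it refutes it. Section~\ref{counter} exhibits explicit counterexamples: for the planar flows $\Phi_1$, $\Phi_2$ of Figure~\ref{ce1} the separatrices are in both cases exactly $p$, $\Gamma_1$ and $\Gamma_2$, and the two flows share the separatrix configuration $\{p\}\cup\Gamma_1\cup\Gamma_2\cup\Gamma_3\cup\Gamma_5$, yet they are not topologically equivalent: any equivalence would have to carry the maximal homoclinic orbit $\Gamma_2$ onto itself, but $\Phi_1$ has two unstable orbits inside $\Gamma_2$ (namely $\Gamma_3$ and $\Gamma_4$) while $\Phi_2$ has only one. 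Figures~\ref{ce2} and~\ref{ce3} give torus counterexamples of the same kind. So the correct response to this statement is a disproof, not a proof; to your credit your final paragraph concedes exactly this, but a concession is not a counterexample, and the decisive content --- an explicit pair of non-equivalent flows with equivalent separatrix configurations --- is absent from your writeup.

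It is also worth noting where, concretely, your sketch breaks. First, your ``central structural input'' --- that every canonical region is parallel --- is itself false under Definition~\ref{bad}: in Figure~\ref{ce2} the only separatrix of $\Phi_3$ (and of $\Phi_4$) is the singular point $p$, so the unique canonical region is the punctured torus $\TT\setminus\{p\}$, which is homeomorphic to none of the four parallel models. Second, even when the canonical regions are parallel and of matching type (as inside $\Gamma_2$ in Figure~\ref{ce1}, a strip for both flows), your transversal-transport construction does produce an equivalence of each \emph{open} canonical region, but the extension across $\Bd C$ fails, and fails unavoidably. The paper's Section~\ref{amend} pinpoints the hidden assumption in Markus' and Neumann's gluing argument: that a transversal in a canonical region ending at an accessible boundary point must be semi-complete. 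This is precisely what the counterexamples violate, and it is restored only after replacing separatrices by the stronger notion of separators (Definition~\ref{proper}), for which Proposition~\ref{keystone} holds and the corrected statement, Theorem~\ref{elbueno}, is proved under the weaker hypothesis that the \emph{essential} singular points form a discrete set. Your diagnosis that ``the gluing can fail'' is in the right spirit, but the remedy is not a cleverer gluing argument --- none exists --- it is a change in the definition of separatrix.
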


\section{Counterexamples to the Markus-Neumann theorem \label{counter}}

As it turns out, Theorem~\ref{mainth} (as presently formulated) is
wrong, the problem being that the previous definition of
separatrix is too restrictive.  A planar counterexample is shown
by Figure~\ref{ce1}. Both flows share the orbits $\Gamma_1$,
$\Gamma_2$, $\Gamma_3$ and $\Gamma_5$ and the singular point $p$,
and the separatrices are just, in both cases, $p$, $\Gamma_1$ and
$\Gamma_2$. For instance, to show that $\Gamma_3$ is ordinary for
the right-hand flow $\Phi_2$, take an orbit $\Gamma$ enclosed by
$\Gamma_2$ but not by $\Gamma_3$, and use the strip $\Omega$
consisting of all orbits enclosed by $\Gamma_2$ but not by
$\Gamma$. Now the boundary of $\Omega$ consist, as required by
Definition~\ref{bad}, of the orbits $\Gamma_2$ and $\Gamma$, and
the singular point $p$, which is both the $\alpha$-limit and the
$\omega$-limit set of all orbits in $\Omega$ and also of
$\Gamma_2$ and $\Gamma$. (Here, as in the examples below, there is
no need to distinguish between $\alpha(q)$ and $\alpha'(q)$ nor
between $\omega(q)$ and $\omega'(q)$, because the only recurrent
orbits are the singular points). Likewise, $\Gamma_4$ is ordinary
for the left-hand flow $\Phi_1$ (use the strip $\Omega$ consisting
of all orbits enclosed by $\Gamma_2$ but not by $\Gamma_3$).

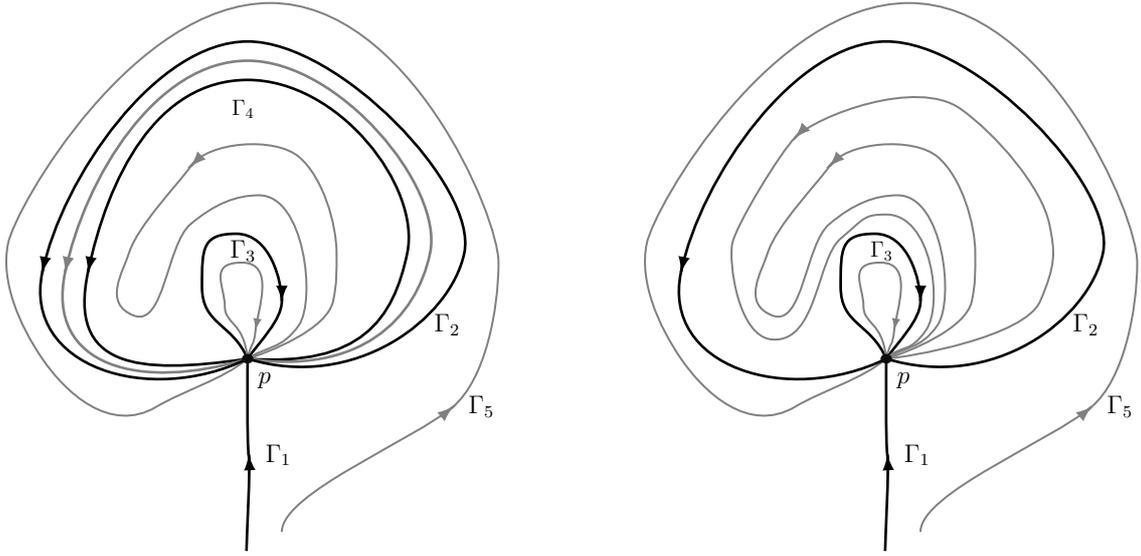
\begin{figure}
\centering
\begin{tikzpicture}[scale=0.15, yscale=0.85]

\begin{scope}[xshift=-28cm]

\draw[line width=0.75, color=gray]
   (3.0,-18.0) .. controls +(90:4) and +(225:5) ..  (18.0,-5.0)
                     .. controls +(45:4) and +(270:5) ..  (22.0, 10.0)
                     .. controls +(90:5) and +(0:12) ..  (2.0,37.0)
                     .. controls +(180:12) and +(80:4) ..  (-21.0,12.0)
                     .. controls +(260:4) and +(120:4) ..  (-18.0,0.0)
                     .. controls +(300:4) and +(215:5) ..  (-8.0,-5.0)
                     .. controls +(35:3) and +(225:3) ..  (0.0,0.0);

\draw[-latex,line width=1.0, color=gray]
    (18.0,-5.0) .. controls +(45:0.05) and +(225:0.05) ..  (18.0,-5.0);


\draw[line width=1.00]
   (-0.1,-20.0) .. controls +(90:1) and +(280:1) .. (0.1,-10.0)
           .. controls +(100:1) and +(270:0.5) ..  (0,0);
\draw[-latex, line width=1.00]
		(0.1,-10.0) .. controls +(93:0.05) and +(273:0.05) ..  (0.1,-10.0);


\draw[line width=1.00]
   (0.0,0.0) .. controls +(-5:10) and +(260:7) .. (14.0,11.0)
           .. controls +(80:7) and +(0:8) ..  (0,29.0)
                     .. controls +(180:8) and +(80:10) ..  (-14,9.0)
                    .. controls +(260:10) and +(190:10) ..  (0,0);
\draw[-latex,line width=1.00]
    (-14,9.0) .. controls +(260:0.05) and +(80:0.05) .. (-14,9.0);

\draw[line width=1.00, color=gray]
   (0.0,0.0) .. controls +(-10:10) and +(260:7) .. (16.0,11.0)
           .. controls +(80:7) and +(0:8) ..  (0,31.0)
                     .. controls +(180:8) and +(80:10) ..  (-16,9.0)
                    .. controls +(260:10) and +(200:10) ..  (0,0);
\draw[-latex,line width=1.00, color=gray]
    (-16,9.0) .. controls +(260:0.05) and +(80:0.05) .. (-16,9.0);


\draw[line width=1.00]
   (0.0,0.0) .. controls +(-20:10) and +(260:7) .. (19.0,11.0)
           .. controls +(80:7) and +(0:8) ..  (0,33.0)
                     .. controls +(180:8) and +(80:10) ..  (-18,9.0)
                    .. controls +(260:10) and +(210:10) ..  (0,0);
\draw[-latex,line width=1.00]
    (-18,9.0) .. controls +(260:0.05) and +(80:0.05) .. (-18,9.0);

\draw[line width=1.00]
   (0.0,0.0) .. controls +(60:2) and +(270:2) .. (3.0,6.0)
           .. controls +(90:2) and +(0:3) ..  (-1.0,13.0)
                     .. controls +(180:3) and +(90:4) ..  (-4,7.5)
                    .. controls +(270:4) and +(110:4) ..  (0,0);
\draw[-latex, line width=1.00]
    (3.0,6.0) .. controls +(270:0.05) and +(90:0.05) .. (3.0,6.0);

\draw[line width=0.75, color=gray]
   (0.0,0.0) .. controls +(80:2) and +(265:2) .. (0.7,3.0)
           .. controls +(85:2) and +(0:3) ..  (-0.5,10.0)
                     .. controls +(180:3) and +(90:2) ..  (-2.0,5.5)
                    .. controls +(270:2) and +(95:4) ..  (0,0);
\draw[-latex,line width=0.5, color=gray]
    (0.7,3.0) .. controls +(262:0.05) and +(82:0.05) .. (0.7,3.0);



\draw[line width=0.75, color=gray]
   (0.0,0.0) .. controls +(20:2) and +(260:3) .. (7.5,6.0)
               .. controls +(80:2) and +(-90:2) ..  (7.8,11.0)
                       .. controls +(90:3) and +(-45:3) ..  (4.5,21.0)
                       .. controls +(135:3) and +(60:3) ..  (-5.0,20.0)
             .. controls +(240:2) and +(75:3) ..  (-10.4,11.0)
                       .. controls +(255:2) and +(155:3) ..  (-10.0,4.5)
                       .. controls +(-25:3) and +(225:3) ..  (-5.0,14.0)
                       .. controls +(45:3) and +(180:2) ..(1.0,17.0)
                       .. controls +(0:4) and +(60:3) ..  (4.5,3.5)
                       .. controls +(240:3) and +(40:2) ..  (0,0);

\draw[-latex,line width=1.0, color=gray]
    (-5,20.0) .. controls +(230:0.05) and +(50:0.05) .. (-5,20.0);
		

\node[scale=0.8] at (17.5,3.5) {$\Gamma_2$};

\node[scale=0.8] at (20.5,-5.0) {$\Gamma_5$};
\node[scale=0.8] at (2.7,-10.0) {$\Gamma_1$};
\node[scale=0.8] at (-0.4,11.2) {$\Gamma_3$};
\node[scale=0.7] at (-0.4,26.0) {$\Gamma_4$};
\node[scale=0.8] at (1.5,-2.2) {$p$};

\fill[color=gray!190!white] (0,0) circle (0.5);

\end{scope}

\begin{scope}[xshift=28cm]

\draw[line width=0.75, color=gray]
   (3.0,-18.0) .. controls +(90:4) and +(225:5) ..  (18.0,-5.0)
                     .. controls +(45:4) and +(270:5) ..  (22.0, 10.0)
                     .. controls +(90:5) and +(0:12) ..  (2.0,37.0)
                     .. controls +(180:12) and +(80:4) ..  (-21.0,12.0)
                     .. controls +(260:4) and +(120:4) ..  (-18.0,0.0)
                     .. controls +(300:4) and +(215:5) ..  (-8.0,-5.0)
                     .. controls +(35:3) and +(225:3) ..  (0.0,0.0);

\draw[-latex,line width=1.0, color=gray]
    (18.0,-5.0) .. controls +(45:0.05) and +(225:0.05) ..  (18.0,-5.0);


\draw[line width=1.00]
   (-0.1,-20.0) .. controls +(90:1) and +(280:1) .. (0.1,-10.0)
           .. controls +(100:1) and +(270:0.5) ..  (0,0);
\draw[-latex, line width=1.00]
    (0.1,-10.0) .. controls +(93:0.05) and +(273:0.05) ..  (0.1,-10.0);


\draw[line width=1.00]
   (0.0,0.0) .. controls +(-20:10) and +(260:7) .. (19.0,11.0)
           .. controls +(80:7) and +(0:8) ..  (0,33.0)
                     .. controls +(180:8) and +(80:10) ..  (-18,9.0)
                    .. controls +(260:10) and +(210:10) ..  (0,0);
\draw[-latex,line width=1.00]
    (-18,9.0) .. controls +(260:0.05) and +(80:0.05) .. (-18,9.0);


%

\draw[line width=1.00]
   (0.0,0.0) .. controls +(60:2) and +(270:2) .. (3.0,6.0)
           .. controls +(90:2) and +(0:3) ..  (-1.0,13.0)
                     .. controls +(180:3) and +(90:4) ..  (-4,7.5)
                    .. controls +(270:4) and +(110:4) ..  (0,0);
\draw[-latex, line width=1.00]
    (3.0,6.0) .. controls +(270:0.05) and +(90:0.05) .. (3.0,6.0);

\draw[line width=0.75, color=gray]
   (0.0,0.0) .. controls +(80:2) and +(265:2) .. (0.7,3.0)
           .. controls +(85:2) and +(0:3) ..  (-0.5,10.0)
                     .. controls +(180:3) and +(90:2) ..  (-2.0,5.5)
                    .. controls +(270:2) and +(95:4) ..  (0,0);
\draw[-latex,line width=0.5, color=gray]
    (0.7,3.0) .. controls +(262:0.05) and +(82:0.05) .. (0.7,3.0);



\draw[line width=0.75, color=gray]
   (0.0,0.0) .. controls +(20:2) and +(260:3) .. (7.5,6.0)
               .. controls +(80:2) and +(-90:2) ..  (7.8,11.0)
                       .. controls +(90:3) and +(-45:3) ..  (4.5,21.0)
                       .. controls +(135:3) and +(60:3) ..  (-5.0,20.0)
             .. controls +(240:2) and +(75:3) ..  (-10.4,11.0)
                       .. controls +(255:2) and +(155:3) ..  (-10.0,4.5)
                       .. controls +(-25:3) and +(225:3) ..  (-5.0,14.0)
                       .. controls +(45:3) and +(180:2) ..(1.0,17.0)
                       .. controls +(0:4) and +(60:3) ..  (4.5,3.5)
                       .. controls +(240:3) and +(40:2) ..  (0,0);

\draw[-latex,line width=1.0, color=gray]
    (-5,20.0) .. controls +(230:0.05) and +(50:0.05) .. (-5,20.0);


\draw[line width=0.75, color=gray]
   (0.0,0.0) .. controls +(5:2) and +(250:4) .. (13.5,6.0)
               .. controls +(70:4) and +(-80:3) ..  (14.3,14.0)
                       .. controls +(100:4) and +(-40:4) ..  (6.5,26.0)
                       .. controls +(140:5) and +(50:3) ..  (-8.0,23.0)
             .. controls +(230:2) and +(80:2) ..  (-13.5,12.0)
                       .. controls +(260:2) and +(175:3) ..  (-10.0,2.0)
                       .. controls +(-5:4) and +(225:3) ..  (-4.0,12.5)
                       .. controls +(45:3) and +(180:2) ..(0.0,15.0)
                       .. controls +(0:4) and +(60:3) ..  (3.5,3.0)
                       .. controls +(240:3) and +(50:2) ..  (0,0);
\draw[-latex,line width=1.0, color=gray]
    (-8,23.0) .. controls +(230:0.05) and +(50:0.05) .. (-8,23.0);

\fill[color=gray!190!white] (0,0) circle (0.5);

\node[scale=0.8] at (1.5,-2.2) {$p$};
\node[scale=0.8] at (20.5,-5.0) {$\Gamma_5$};
\node[scale=0.8] at (2.7,-10.0) {$\Gamma_1$};
\node[scale=0.8] at (17.5,3.5) {$\Gamma_2$};
\node[scale=0.7] at (-0.4,11.2) {$\Gamma_3$};

\end{scope}

\end{tikzpicture}

\caption{The phase portraits of flows $\Phi_1$ (left) and
$\Phi_2$.\label{ce1}}
\end{figure}

Now, since the separatrix configurations $\mathcal{S}_1^+=
\mathcal{S}_2^+=\{p\}\cup\Gamma_1\cup\Gamma_2\cup\Gamma_3\cup\Gamma_5$
are the same, the flows $\Phi_1$ and $\Phi_2$ should be, according
to Theorem~\ref{mainth}, topologically equivalent. Clearly, they
are not: since $\Gamma_2$ is, in both cases, the maximal
homoclinic orbit, the topological equivalence should carry it onto
itself. However, there are two unstable orbits ($\Gamma_3$ and
$\Gamma_4$) inner to $\Gamma_2$ for $\Phi_1$, but just one
($\Gamma_3$) for $\Phi_2$.

\begin{remark}
As shown in \cite{EJ} (see also \cite{SS}) flows $\Phi_1$ and
$\Phi_2$ can in fact be realized by polynomial vector fields (or,
via the Bendixson projection, by analytic sphere flows with just
two singular points).
\end{remark}

An even cleaner (torus) counterexample is exhibited by
Figure~\ref{ce2}. Here, the left-hand flow $\Phi_3$ and the right
hand flow $\Phi_4$ share the orbits $\Gamma_1$ and $\Gamma_2$  and
the singular point $p$, and all orbits are homoclinic. As it
happens, $p$ is the only separatrix for both flows. To show, say,
that $\Gamma_1$ is ordinary (for $\Phi_4$), remove from $\TT$ the
closure of the strip delimited by $\Gamma_3$ and $\Gamma_4$ and
containing $\Gamma_2$, to get a radial region containing
$\Gamma_1$ with boundary $\Gamma_3\cup\Gamma_4\cup \{p\}$. In the
case of $\Phi_3$, the radial region $\Omega'=\TT\setminus
(\Gamma_2\cup \{p\})$ cannot be used (there is just one regular
orbit in its boundary), but we take off another orbit $\Gamma$ and
use the strip $\Omega=\Omega'\setminus \Gamma$ instead. Once
again, the separatrix configurations
$\mathcal{S}_3^+=\mathcal{S}_4^+=\{p\} 
\cup \Gamma_1$ coincide, but $\Phi_3$ and
$\Phi_4$ are not equivalent because $\Phi_3$ has three unstable
orbits ($p$, $\Gamma_1$ and $\Gamma_2$) and $\Phi_4$ has four
($p$, $\Gamma_1$, $\Gamma_3$ and $\Gamma_4$).

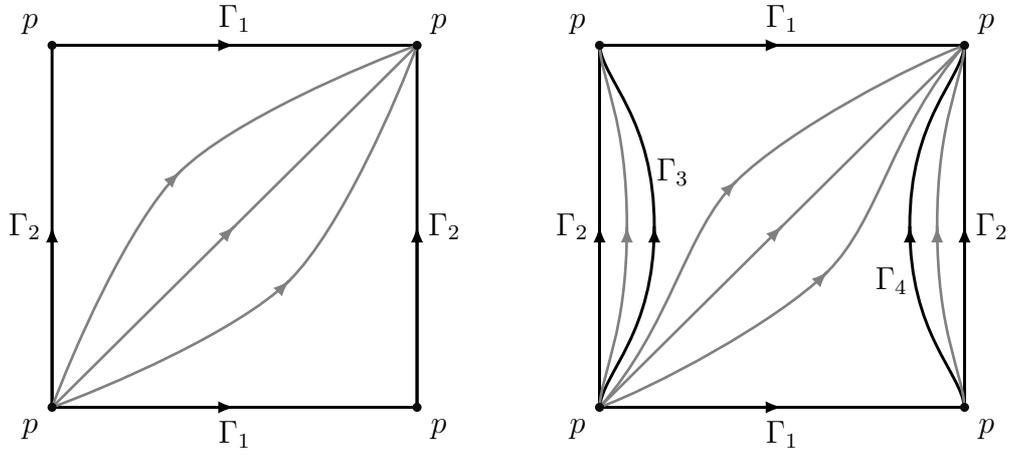
\begin{figure}
\centering
\begin{tikzpicture}[scale=.24]

\begin{scope}[xshift=0cm]

 \draw[line width=1.1] (-10.0,10.0) -- (10.0,10.0);
 \draw[line width=1.1] (-10.0,-10.0) -- (10.0,-10.0);
 \draw[line width=1.1] (-10.0,10.0) -- (-10.0,-10.0);
 \draw[line width=1.1] (10.0,10.0) -- (10.0,-10.0);

 \draw[-latex, line width=1.1] (-10.0,10.0) -- (0.0,10.0);
 \draw[-latex, line width=1.1] (-10.0,-10.0) -- (0.0,-10.0);
 \draw[-latex, line width=1.1] (-10.0,-10.0) -- (-10.0,0.0);
 \draw[-latex, line width=1.1] (10.0,-10.0) -- (10.0,0.0);

\draw[line width=1.0, color=gray]
   (-10.0,-10.0) .. controls +(45:2) and +(225:2) .. (.0,0.0)
           .. controls +(45:2) and +(225:2) ..  (10.0,10.0);
\draw[-latex, color=gray, line width=1.1]
    (0.0,0.0) .. controls +(45:0.05) and +(225:0.05) .. (0.0,0.0);

\draw[line width=1.0, color=gray]
   (-10.0,-10.0) .. controls +(70:3) and +(225:4) .. (-3.0,3.0)
           .. controls +(45:4) and +(200:3) ..  (10.0,10.0);
\draw[-latex, color=gray, line width=1.1]
    (-3.0,3.0) .. controls +(45:0.05) and +(225:0.05) .. (-3.0,3.0);

\draw[line width=1.0, color=gray]
   (-10.0,-10.0) .. controls +(20:3) and +(225:4) .. (3.0,-3.0)
           .. controls +(45:4) and +(250:3) ..  (10.0,10.0);
\draw[-latex, color=gray, line width=1.1]
    (3.0,-3.0) .. controls +(45:0.05) and +(225:0.05) .. (3.0,-3.0);

\fill[color=gray!190!white] (-10.0,10.0) circle (0.25);
\fill[color=gray!190!white] (-10.0,-10.0) circle (0.25);
\fill[color=gray!190!white] (10.0,-10.0) circle (0.25);
\fill[color=gray!190!white] (10.0,10.0) circle (0.25);

\node[scale=1.0] at (-11.5,0.0) {$\Gamma_2$};
\node[scale=1.0] at (11.5,0.0) {$\Gamma_2$};
\node[scale=1.0] at (0.0, -11.5) {$\Gamma_1$};
\node[scale=1.0] at (0.0, 11.5) {$\Gamma_1$};
\node[scale=1.0] at (-11.2,-11.2) {$p$};
\node[scale=1.0] at (-11.2,11.2) {$p$};
\node[scale=1.0] at (11.2, -11.2) {$p$};
\node[scale=1.0] at (11.2, 11.2) {$p$};
\end{scope}

\begin{scope}[xshift=30cm]
 \draw[line width=1.1] (-10.0,10.0) -- (10.0,10.0);
 \draw[line width=1.1] (-10.0,-10.0) -- (10.0,-10.0);
 \draw[line width=1.1] (-10.0,10.0) -- (-10.0,-10.0);
 \draw[line width=1.1] (10.0,10.0) -- (10.0,-10.0);

 \draw[-latex, line width=1.1] (-10.0,10.0) -- (0.0,10.0);
 \draw[-latex, line width=1.1] (-10.0,-10.0) -- (0.0,-10.0);
 \draw[-latex, line width=1.1] (-10.0,-10.0) -- (-10.0,0.0);
 \draw[-latex, line width=1.1] (10.0,-10.0) -- (10.0,0.0);

\draw[line width=1.0, color=gray]
   (-10.0,-10.0) .. controls +(45:2) and +(225:2) .. (.0,0.0)
           .. controls +(45:2) and +(225:2) ..  (10.0,10.0);
\draw[-latex, color=gray, line width=1.1]
    (0.0,0.0) .. controls +(45:0.05) and +(225:0.05) .. (0.0,0.0);

\draw[line width=1.0, color=gray]
   (-10.0,-10.0) .. controls +(49:6) and +(225:4) .. (-2.5,2.5)
           .. controls +(45:4) and +(200:3) ..  (10.0,10.0);
\draw[-latex, color=gray, line width=1.1]
    (-2.5,2.5) .. controls +(45:0.05) and +(225:0.05) .. (-2.5,2.5);

\draw[line width=1.0, color=gray]
   (-10.0,-10.0) .. controls +(20:3) and +(225:4) .. (2.5,-2.5)
           .. controls +(45:4) and +(229:6) ..  (10.0,10.0);
\draw[-latex, color=gray, line width=1.1]
    (2.5,-2.5) .. controls +(45:0.05) and +(225:0.05) .. (2.5,-2.5);

\draw[line width=1.1]
   (-10.0,-10.0) .. controls +(84:2) and +(270:6) .. (-7.0,0.0)
           .. controls +(90:6) and +(-84:2) ..  (-10.0,10.0);
\draw[-latex, line width=1.1]
    (-7.0,0.0) .. controls +(90:0.05) and +(270:0.05) .. (-7.0,0.0);
		
\draw[line width=1.0, color=gray]
   (-10.0,-10.0) .. controls +(87:2) and +(270:6) .. (-8.5,0.0)
           .. controls +(90:6) and +(-87:2) ..  (-10.0,10.0);
\draw[-latex, color=gray, line width=1.1]
    (-8.5,0.0) .. controls +(90:0.05) and +(270:0.05) .. (-8.5,0.0);

\draw[line width=1.1]
   (10.0,-10.0) .. controls +(96:2) and +(270:6) .. (7.0,0.0)
           .. controls +(90:6) and +(-96:2) ..  (10.0,10.0);
\draw[-latex, line width=1.1]
    (7.0,0.0) .. controls +(90:0.05) and +(270:0.05) .. (7.0,0.0);
		
\draw[line width=1.0, color=gray]
   (10.0,-10.0) .. controls +(93:2) and +(270:6) .. (8.5,0.0)
           .. controls +(90:6) and +(-93:2) ..  (10.0,10.0);
\draw[-latex, color=gray, line width=1.1]
    (8.5,0.0) .. controls +(90:0.05) and +(270:0.05) .. (8.5,0.0);

\fill[color=gray!190!white] (-10.0,10.0) circle (0.25);
\fill[color=gray!190!white] (-10.0,-10.0) circle (0.25);
\fill[color=gray!190!white] (10.0,-10.0) circle (0.25);
\fill[color=gray!190!white] (10.0,10.0) circle (0.25);

\node[scale=1.0] at (-11.5,0.0) {$\Gamma_2$};
\node[scale=1.0] at (11.5,0.0) {$\Gamma_2$};
\node[scale=1.0] at (0.0, -11.5) {$\Gamma_1$};
\node[scale=1.0] at (0.0, 11.5) {$\Gamma_1$};
\node[scale=1.0] at (-11.2,-11.2) {$p$};
\node[scale=1.0] at (-11.2,11.2) {$p$};
\node[scale=1.0] at (11.2, -11.2) {$p$};
\node[scale=1.0] at (11.2, 11.2) {$p$};

\node[scale=1.0] at (-6.0,3.0) {$\Gamma_3$};
\node[scale=1.0] at (6.0,-3.0) {$\Gamma_4$};

\end{scope}

\end{tikzpicture}
\caption{The phase portraits of flows $\Phi_3$ (left) and
$\Phi_4$.\label{ce2}}
\end{figure}

Clearly, the problem with the previous examples is that the
neighbouring regions we are using for ordinary orbits are, so to
speak, too ``big'', and as a consequence the bounding orbits are
not what they are ``supposed'' to be. A way to avoid this is not
allowing parallel regions to be radial (trivially they cannot be
toral either) in Definition~\ref{bad}. Moreover, we can force
strips to be ``strong''. More precisely, we say that a strip
$\Omega$ is \emph{strong} if there are non-recurrent orbits
$\Gamma_1,\Gamma_2$ such that $(\Omega', \Phi)$ is topologically
equivalent to the restriction of the flow $\Phi_s$ to $\R \times
[-1,1]$, $\Omega'=\Omega\cup\Gamma_1\cup\Gamma_2$. We call
$\Gamma_1$ and $\Gamma_2$ the \emph{border orbits} of the strip
$\Omega$, and say that a complete transversal to $\Omega$ is
\emph{strong} if it can be extended to an arc (that is, a
homeomorphic set to the interval $[0,1]$) by adding one point from
each border orbit. All orbits from a strip are non-recurrent: by
requiring that the border orbits of a strong strip also are, we
get rid of the annoying distinction between $\alpha_\Phi(p)$ and
$\alpha_\Phi'(p)$ or $\omega_\Phi(p)$ and $\omega_\Phi'(p)$ (in
the annular case, Definition~\ref{bad}(i) and (ii) are quite
redundant, anyway).

Unexpectedly, Theorem~\ref{mainth} keeps failing even after
redefining ordinary orbits  as in the paragraph above, see
Figure~\ref{ce3}. In this torus example, common orbits to $\Phi_5$
and $\Phi_6$ are $\Gamma_1$, $\Gamma_2$, $\Gamma_3$ and $\Gamma_5$
and the singular points $p$ and $q$. Observe that typical orbits
of these flows have $\Gamma_1\cup \{p\}$  as their $\alpha$-limit
set and $\{p\}$ as their $\omega$-limit set. Checking that
$\Phi_5$ and $\Phi_6$ are not equivalent, while having the same
separatrix configurations $\mathcal{S}_5^+=
\mathcal{S}_6^+=\{p,q\}\cup\Gamma_1\cup\Gamma_2\cup\Gamma_3\cup\Gamma_5$,
is left to the reader's care.

\begin{figure}
\centering
\begin{tikzpicture}[scale=.24]

\begin{scope}[xshift=0cm, yscale=2.0/3.0]
 \draw[line width=0.3, color=gray] (-10.0,15.0) -- (-10.0,-15.0);
 \draw[line width=0.3, color=gray] (10.0,15.0) -- (10.0,-15.0);
 \draw[line width=1.1] (-10.0,0.0) -- (-10.0,-15.0);
 \draw[line width=1.1] (10.0,0.0) -- (10.0,-15.0);
 \draw[line width=1.1] (-10.0,15.0) -- (10.0,15.0);
 \draw[line width=1.1] (-10.0,-15.0) -- (10.0,-15.0);


 \draw[-latex, line width=1.1] (-10.0,15.0) -- (0.0,15.0);
 \draw[-latex, line width=1.1] (-10.0,-15.0) -- (-0.0,-15.0);
 \draw[-latex, line width=1.1] (-10.0,0.0) -- (-10.0,-8.0);
 \draw[-latex, line width=1.1] (10.0,0.0) -- (10.0,-8.0);

\draw[line width=1.1] (-10.0,14.0) .. controls +(-5:2) and +(172:2) .. (0.0,13.0)
           .. controls +(-8:2) and +(170:2) ..  (10.0,11.0);
\draw[-latex, line width=1.1] (0.0,13.0) .. controls +(-8:0.05) and +(172:0.05) .. (0.0,13.0);

\draw[line width=1.1] (-10.0,11.0) .. controls +(-10:4) and +(155:3) .. (0.0,8.5) 
           .. controls +(-25:3) and +(135:3) .. (7.0,4.5)
           .. controls +(-45:2) and +(110:2) ..  (10.0,0.0);
\draw[-latex, line width=1.1] (7.0,4.5) .. controls +(-45:0.05) and +(135:0.05) .. (7.0,4.5);


\draw[line width=0.75, color=gray] (-10.0,8.5) .. controls +(-10:2) and +(90:7) .. (0.0,0.0)
           .. controls +(270:7) and +(150:2) ..  (10.0,-15);
\draw[-latex, line width=1.1, color=gray] (0.0,0.0) .. controls +(270:0.05) and +(90:0.05) .. (0.0,0.0);

\draw[line width=0.75, color=gray] (-10.0,13.2) .. controls +(-2:2) and +(170:3) .. (0.0,11.8)
           .. controls +(-10:3) and +(150:2) ..  (10.0,8.5);
\draw[-latex, line width=1.1, color=gray] (0.0,11.8) .. controls +(0:0.05) and +(180:0.05) .. (0.0,11.8);


\draw[line width=0.75, color=gray] (-10.0,10.0) .. controls +(-3:5) and +(110:6) .. (7.0,0.0)
           .. controls +(290:6) and +(120:2) ..  (10.0,-15);
\draw[-latex, line width=1.1, color=gray] (7.0,0.0) .. controls +(290:0.05) and +(110:0.05) .. (7.0,0.0);

\draw[line width=0.75, color=gray] (-10.0,6.0) .. controls +(-20:2) and +(90:5) .. (-7.0,-7.0)
           .. controls +(270:5) and +(170:2) ..  (10.0,-15);
\draw[-latex, line width=1.1, color=gray] (-7.0,-7.0) .. controls +(270:0.05) and +(90:0.05) .. (-7.0,-7.0);

\end{scope}

\begin{scope}[xshift=0cm]
\fill[color=gray!190!white] (-10.0,10.0) circle (0.25);
\fill[color=gray!190!white] (-10.0,-10.0) circle (0.25);
\fill[color=gray!190!white] (10.0,-10.0) circle (0.25);
\fill[color=gray!190!white] (10.0,10.0) circle (0.25);
\fill[color=gray!190!white] (10.0,0.0) circle (0.25);
\fill[color=gray!190!white] (-10.0,0.0) circle (0.25);

\node[scale=1.0] at (-11.5,-5.0) {$\Gamma_2$};
\node[scale=1.0] at (11.5,-5.0) {$\Gamma_2$};
\node[scale=1.0] at (-11.2,0.0) {$q$};
\node[scale=1.0] at (11.2,0.0) {$q$};
\node[scale=1.0] at (0.0, -11.5) {$\Gamma_1$};
\node[scale=1.0] at (0.0, 11.5) {$\Gamma_1$};
\node[scale=1.0] at (-11.2,-11.2) {$p$};
\node[scale=1.0] at (-11.2,11.2) {$p$};
\node[scale=1.0] at (11.2, -11.2) {$p$};
\node[scale=1.0] at (11.2, 11.2) {$p$};

\node[scale=0.8] at (-1.5,0.0) {$\Gamma_5$};
\node[scale=0.8] at (0.0,6.8) {$\Gamma_5$};
\node[scale=0.8] at (8.2,1.3) {$\Gamma_3$};
\node[scale=0.8] at (8.2,8.9) {$\Gamma_3$};

\end{scope}

\begin{scope}[xshift=30cm, yscale=2.0/3.0]

 \draw[line width=0.3, color=gray] (-10.0,15.0) -- (-10.0,-15.0);
 \draw[line width=0.3, color=gray] (10.0,15.0) -- (10.0,-15.0);
 \draw[line width=1.1] (-10.0,0.0) -- (-10.0,-15.0);
 \draw[line width=1.1] (10.0,0.0) -- (10.0,-15.0);
 \draw[line width=1.1] (-10.0,15.0) -- (10.0,15.0);
 \draw[line width=1.1] (-10.0,-15.0) -- (10.0,-15.0);


 \draw[-latex, line width=1.1] (-10.0,15.0) -- (0.0,15.0);
 \draw[-latex, line width=1.1] (-10.0,-15.0) -- (-0.0,-15.0);
 \draw[-latex, line width=1.1] (-10.0,0.0) -- (-10.0,-8.0);
 \draw[-latex, line width=1.1] (10.0,0.0) -- (10.0,-8.0);

\draw[line width=1.1] (-10.0,14.0) .. controls +(-5:2) and +(172:2) .. (0.0,13.0)
           .. controls +(-8:2) and +(170:2) ..  (10.0,11.0);
\draw[-latex, line width=1.1] (0.0,13.0) .. controls +(-8:0.05) and +(172:0.05) .. (0.0,13.0);

\draw[line width=1.1] (-10.0,11.0) .. controls +(-10:4) and +(155:3) .. (0.0,8.5) 
           .. controls +(-25:3) and +(135:3) .. (7.0,4.5)
           .. controls +(-45:2) and +(110:2) ..  (10.0,0.0);
\draw[-latex, line width=1.1] (7.0,4.5) .. controls +(-45:0.05) and +(135:0.05) .. (7.0,4.5);


\draw[line width=0.75, color=gray] (-10.0,8.5) .. controls +(-10:2) and +(90:7) .. (0.0,0.0)
           .. controls +(270:7) and +(150:2) ..  (10.0,-15);
\draw[-latex, line width=1.1, color=gray] (0.0,0.0) .. controls +(270:0.05) and +(90:0.05) .. (0.0,0.0);

\draw[line width=0.75, color=gray] (-10.0,13.2) .. controls +(-2:2) and +(170:3) .. (0.0,11.8)
           .. controls +(-10:3) and +(150:2) ..  (10.0,8.5);
\draw[-latex, line width=1.1, color=gray] (0.0,11.8) .. controls +(0:0.05) and +(180:0.05) .. (0.0,11.8);


\draw[line width=0.75, color=gray] (-10.0,10.0) .. controls +(-3:5) and +(110:6) .. (7.0,0.0)
           .. controls +(290:6) and +(120:2) ..  (10.0,-15);
\draw[-latex, line width=1.1, color=gray] (7.0,0.0) .. controls +(290:0.05) and +(110:0.05) .. (7.0,0.0);

\draw[line width=0.75, color=gray] (-10.0,7.5) .. controls +(-15:2) and +(100:5) .. (-4.0,-6.0)
           .. controls +(280:5) and +(170:2) ..  (10.0,-15);
\draw[-latex, line width=1.1, color=gray] (-4.0,-6.0) .. controls +(280:0.05) and +(100:0.05) .. (-4.0,-6.0);

%
%

		
\draw[line width=0.75, color=gray]
    (-10.0,4.0) .. controls +(-70:3) and +(90:3) .. (-8.5,-8.0)
                .. controls +(270:3) and +(70:3) ..  (-10.0,-15.0);
\draw[-latex, line width=1.1, color=gray]
    (-8.5,-8.0) .. controls +(270:0.05) and +(90:0.05) .. (-8.5,-8.0);
		
\draw[line width=0.75, color=gray] (-10.0,11.5) .. controls +(-8:5) and +(165:2) .. (0.0,9.3) 
           .. controls +(-15:2) and +(155:1) .. (7.0,5.9)
           .. controls +(-25:1) and +(150:2) ..  (10.0,4.0);
\draw[-latex, line width=1.1, color=gray] (7.0,5.9) .. controls +(-25:0.05) and +(155:0.05) .. (7.0,5.9);		

\draw[line width=1.1]
    (-10.0,5.5) .. controls +(-60:3) and +(90:3) .. (-7.0,-8.0)
                .. controls +(270:3) and +(60:3) ..  (-10.0,-15.0);
\draw[-latex, line width=1.1]
    (-7.0,-8.0) .. controls +(270:0.05) and +(90:0.05) .. (-7.0,-8.0);

\draw[line width=1.1] (-10.0,12.0) .. controls +(-3:2) and +(170:3) .. (0.0,10.6)
           .. controls +(-10:3) and +(140:2) ..  (10.0,5.5);
\draw[-latex, line width=1.1] (0.0,10.6) .. controls +(0:0.05) and +(180:0.05) .. (0.0,10.6);
		

%
%
		%
		%
%
		%
%
		%
%
\end{scope}

\begin{scope}[xshift=30cm]
\fill[color=gray!190!white] (-10.0,10.0) circle (0.25);
\fill[color=gray!190!white] (-10.0,-10.0) circle (0.25);
\fill[color=gray!190!white] (10.0,-10.0) circle (0.25);
\fill[color=gray!190!white] (10.0,10.0) circle (0.25);
\fill[color=gray!190!white] (10.0,0.0) circle (0.25);
\fill[color=gray!190!white] (-10.0,0.0) circle (0.25);

\node[scale=1.0] at (-11.5,-5.0) {$\Gamma_2$};
\node[scale=1.0] at (11.5,-5.0) {$\Gamma_2$};
\node[scale=1.0] at (-11.2,0.0) {$q$};
\node[scale=1.0] at (11.2,0.0) {$q$};
\node[scale=1.0] at (0.0, -11.5) {$\Gamma_1$};
\node[scale=1.0] at (0.0, 11.5) {$\Gamma_1$};
\node[scale=1.0] at (-11.2,-11.2) {$p$};
\node[scale=1.0] at (-11.2,11.2) {$p$};
\node[scale=1.0] at (11.2, -11.2) {$p$};
\node[scale=1.0] at (11.2, 11.2) {$p$};

\node[scale=0.8] at (-1.5,0.0) {$\Gamma_5$};
\node[scale=0.8] at (8.2,1.3) {$\Gamma_3$};
\node[scale=0.8] at (8.2,8.9) {$\Gamma_3$};
\node[scale=1.0] at (-5.5,-5.0) {$\Gamma_4$};
\node[scale=0.8] at (8.7,15.1/3.0) {$\Gamma_4$};

\end{scope}

\end{tikzpicture}
\caption{The phase portraits of flows $\Phi_5$ (left) and
$\Phi_6$.\label{ce3}}
\end{figure}
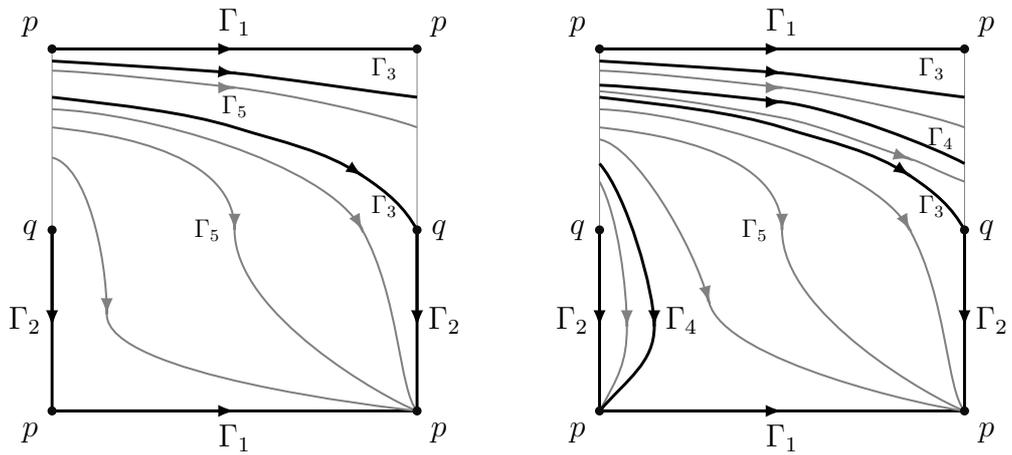

The underlying problem here is that $\alpha$-limit and
$\omega$-limit sets should be separately managed by
Definition~\ref{bad}, but they are not. The orbit $\Gamma_4$ is
neighboured by strong strips, as close to it as required, whose
boundaries consist of (as prescribed) the border orbits, the
$\alpha$-limit set $\Gamma_1\cup \{p\}$ and, \emph{a fortiori},
the $\omega$-limit set $\{p\}$. Nevertheless, after removing a
strong transversal from the strip, the boundary of the forward
semi-strip contains, besides the border semi-orbits, the strong
transversal and the $\omega$-limit point $\{p\}$, the ``spurious''
orbit $\Gamma_1$.

Taking all the above into consideration, we define:

\begin{definition}
 \label{proper}
 We say that an orbit $\varphi(p)$ of $(M, \Phi)$
 is \emph{almost fine}  if it is
 neighboured by an annular
 region or by a strong strip $\Omega$ with border orbits
 $\varphi(a)$ and
 $\varphi(b)$ such that:
 \begin{itemize}
 \item[(i)] $\alpha(q)=\alpha(p)$ and
   $\omega(q)=\omega(p)$ for any
   $q\in \Omega\cup\varphi(a)\cup\varphi(b)$;
 \item[(ii)]
   $\Bd\Omega=\varphi(a)\cup\varphi(b)\cup\alpha(p)
   \cup\omega(p)$.
 \end{itemize}
 If $\varphi(p)$ satisfies the analogous conditions, replacing
 (ii) by
 \begin{itemize}
 \item[(ii')] if $T$ is a strong transversal to $\Omega$
   with endpoints $a$ and $b$,
     then $\Bd\Omega_T^-=T\cup\varphi(-\infty,a)\cup
   \varphi(-\infty,b)\cup\alpha(p)$ and
  $\Bd\Omega_T^+=T\cup\varphi(a,\infty)\cup
   \varphi(b,\infty)\cup\omega(p)$,
 \end{itemize}
 then we say that $\varphi(p)$ is \emph{fine}.

If an orbit is not fine, then it is called a \emph{separator}.
\end{definition}

Observe that the union set of all separators is closed as well,
when the components of its complementary set will be called
\emph{standard regions}. Since all separatrices are separators,
every standard region is contained in a canonical region. The
notions of \emph{separator configuration} and of \emph{equivalence
of separator configurations} are accordingly defined.

\begin{remark}
 \label{menosmal}
Typically, books and papers invoking the Markus-Neumann theorem in
the setting of analytic sphere flows (in particular, after
carrying polynomial planar flows to the sphere via the Bendixson
or the Poincar\'e projections), use an alternative definition of
separatrix, see for instance \cite[Section~3.11]{Pe}. Here, under
the additional assumption of finiteness of singular points, an
orbit is called a ``separatrix'' if and only if it is either a
singular point, a limit cycle, or an orbit lying in the boundary
of an hyperbolic sector. Using the finite sectorial decomposition
property for isolated (non-centers) singular points of analytic
flows, noting that analyticity excludes the existence of one-sided
isolated periodic orbits, and recalling some basic
Poincar\'e-Bendixson theory, it is not difficult to show that this
notion is, in fact, equivalent to that of separator (see also
Proposition~\ref{casi}(b) or (c) below). Of course, as emphasized
by our first counterexample (and contrarily to that stated in
\cite{Pe}) there may be orbits bounding hyperbolic sectors which
are not separatrices in the Markus-Neumann sense.

Note, finally, that the previous discussion make no sense outside
the sphere (just think of the irrational flow on the torus: here
all orbits are separatrices).
\end{remark}

Trivially, a fine orbit is almost fine. The converse is not true,
as shown by the flow $\Phi_6$. Nevertheless, we have:

\begin{proposition}
 \label{casi}
 Let $\varphi(p)$ be an almost fine orbit of $(M,\Phi)$.
 Assume that one of the following conditions holds:
 \begin{itemize}
 \item[(a)] Both $\alpha(p)$ and $\omega(p)$ are finite (that is,
  empty or consisting of one point);
 \item[(b)] $M$ has zero genus and the set of essential singular
  points of $\Phi$ is totally disconnected;
 \item[(c)] $M=\RR$ or $M=\SS$.
 \end{itemize}
 Them $\varphi(p)$ is fine.
\end{proposition}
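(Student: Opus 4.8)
The annular case is immediate: when $\Omega$ is annular a transversal does not disconnect it, so the regions $\Omega_T^-$ and $\Omega_T^+$ appearing in condition (ii') are not defined and (ii') holds vacuously; hence an almost fine orbit neighboured by an annular region is automatically fine. So assume $\Omega$ is a strong strip and fix the topological equivalence $\psi\colon \R\times[-1,1]\to\Omega'=\Omega\cup\varphi(a)\cup\varphi(b)$ carrying the restriction of $\Phi_s$ to $\Phi$, with $\varphi(a)=\psi(\R\times\{-1\})$ and $\varphi(b)=\psi(\R\times\{1\})$. Given a strong transversal $T$ with endpoints $a,b$ we may arrange $\hat T=\psi(\{0\}\times[-1,1])$, so that $\Omega_T^+=\psi((0,\infty)\times(-1,1))$ and $\Omega_T^-=\psi((-\infty,0)\times(-1,1))$. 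Reading off the boundary inside $\Omega'$ from the model and invoking (ii), one finds $\Bd\Omega_T^+=T\cup\varphi(a,\infty)\cup\varphi(b,\infty)\cup\omega(p)\cup(\Cl\Omega_T^+\cap\alpha(p))$: the forward semiorbits of the orbits meeting $\Omega_T^+$ never leave $\Omega_T^+$, so $\omega(p)\subseteq\Cl\Omega_T^+$, while the only candidate spurious points come from $\alpha(p)$. Thus (ii'), and with it fineness, is equivalent to the separation property
$$(\star)\qquad \Cl\Omega_T^+\cap\alpha(p)\subseteq\omega(p)\ \text{ and }\ \Cl\Omega_T^-\cap\omega(p)\subseteq\alpha(p).$$

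Under (a) I would prove $(\star)$ by showing that $\Cl\Omega$ is itself a closed disk, so that \emph{no} appeal to the genus of $M$ is required. Write $\alpha(p)=\{u\}$ and $\omega(p)=\{w\}$ (allowing $u$ or $w$ to be $\infty$ after passing to $\Phi_\infty$, and noting both are singular points). Since every orbit of $\Omega'$ has $\alpha$-limit $\{u\}$ and $\omega$-limit $\{w\}$, a compactness argument shows $\psi(t,s)\to u$ as $t\to-\infty$ and $\psi(t,s)\to w$ as $t\to+\infty$, \emph{uniformly} in $s$; hence $\psi$ extends continuously to the quotient of $\R\times[-1,1]$ obtained by collapsing each end to a point, which is a closed disk (a bigon with vertices $u,w$). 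By (ii) this extension is a continuous bijection onto the compact Hausdorff set $\Cl\Omega$, hence a homeomorphism. In this disk $\hat T$ is a cross-cut joining the boundary arcs $\Cl\varphi(a)$ and $\Cl\varphi(b)$ and separating the vertex $u$ from the vertex $w$; therefore $u\notin\Cl\Omega_T^+$ and $w\notin\Cl\Omega_T^-$, which is exactly $(\star)$ (the degenerate case $u=w$ being trivial).

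Under (b) or (c) the limit sets may be nondegenerate continua, so instead I would pass to $(M_\infty,\Phi_\infty)$ and exploit planarity. In case (c) we have $M_\infty=\SS$; in case (b) the zero genus of $M$ makes $M_\infty$ planar, hence embeddable in $\SS$, and the hypothesis that the essential singular points are totally disconnected will be used to guarantee that the closure of $\Omega$ taken in $M_\infty$ agrees with the one taken in $\SS$ and that the limiting continua are tame. In either case $\Omega$ is an open subset of $\SS$ homeomorphic to a disk and $\hat T$ is a cross-cut of it. The plan is to apply Carath\'eodory's prime-end theory to $\Omega\subset\SS$: the two ends $t\to\pm\infty$ of the strip determine two disjoint families of prime ends whose impressions lie in $\omega(p)$ and $\alpha(p)$ respectively, and the cross-cut $\hat T$ splits the circle of prime ends into the arcs corresponding to $\Omega_T^+$ and $\Omega_T^-$. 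Since $\Cl\Omega_T^+$ meets $\Bd\Omega$ only in the impressions of the forward arc, any point of $\alpha(p)\setminus\omega(p)$, lying solely in a backward impression, cannot belong to $\Cl\Omega_T^+$; this yields $(\star)$.

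The main obstacle is this prime-end step in cases (b) and (c): I must show that the impression of the forward family of prime ends lands inside $\omega(p)$ and not inside $\alpha(p)\setminus\omega(p)$, which amounts to controlling how the strip accumulates on its two limit continua. This is exactly where planarity is indispensable — the torus flow $\Phi_6$ shows the statement fails without it — and where the totally disconnected essential singular set of (b) enters, through a Poincar\'e--Bendixson description of $\alpha(p)$ and $\omega(p)$ that forbids the forward end from impinging on the $\alpha$-limit. In case (a) the analogous difficulty is concentrated entirely in the verification of the uniform convergence $\psi(t,\cdot)\to u,w$, which turns $\Cl\Omega$ into a disk and reduces the whole matter to the elementary separation of two points by a cross-cut.
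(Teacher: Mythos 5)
Your structural work is sound: the reduction of (ii$'$) to the separation property $(\star)$, the observation that the annular case is vacuous, and the bigon/cross-cut argument that would close case (a) once uniform convergence is known are all correct. The gaps sit exactly where the content of the proposition lies. In case (a), ``a compactness argument shows'' the uniform convergence is not a proof but a restatement of the problem: pointwise convergence of the orbits of a strong strip to a one-point limit set does \emph{not} upgrade to uniform convergence by generic compactness, and the paper's flow $\Phi_6$ is a counterexample to precisely that upgrade (every orbit of the relevant strip tends forward to the single point $p$, yet the forward semi-strip accumulates on all of $\Gamma_1\cup\{p\}$). Any correct argument must use both halves of hypothesis (a) together with condition (ii). Such an argument does exist, so your case (a) is reparable: for instance, the sets $K_m=\Cl\bigl(\psi([m,\infty)\times[-1,1])\bigr)$, with closures taken in $M_\infty$, form a nested sequence of compact connected sets whose intersection avoids $\Omega'$ and hence, by (ii), is a connected subset of the finite set $\{u,w,\infty\}$; since it contains $w$, it equals $\{w\}$ whenever $u\neq w$, which is exactly the uniform forward convergence. (The paper argues differently, locally at $u$: it traps $\Omega_{T'}^-$ inside a small disk $D$ around $u$ cut by the arc $\varphi(-,a')\cup\{u\}\cup\varphi(-,b')$ and identifies it with one of the two complementary regions.) As written, however, the step carrying all the weight is assumed rather than proved, and in a paper whose very subject is the failure of such ``obvious'' boundary statements, that is a genuine gap.

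For (b) and (c) the gap is not repaired by a one-line insertion: the prime-end machinery you introduce only renames the problem, since ``the impressions of the forward prime ends lie in $\omega(p)$'' \emph{is} property $(\star)$ in prime-end language, and you yourself flag this as ``the main obstacle'' without proving it; announcing that Poincar\'e--Bendixson theory and total disconnectedness ``will be used'' is not an argument. There is also a false auxiliary claim: for a zero-genus $M$ with infinitely many ends (say $\SS$ minus a Cantor set), $M_\infty$ is not a surface and does not embed in $\SS$; the paper embeds $M$ itself in $\SS$, never $M_\infty$. What is missing is the substance of the paper's proof. In (b): if $\alpha(p)\cup\omega(p)$ contains a regular or horizontal point, a Poincar\'e--Bendixson circle formed by a segment $\varphi(c,d)$ of $\varphi(p)$ and a transversal separates $\alpha(p)$ from $\omega(p)$ in $\SS$, and flowing $T$ forward and backward places $\Omega_{\Phi_{t_0}(T)}^+$ and $\Omega_{\Phi_{-t_0}(T)}^-$ on opposite sides of it; if instead every point of $\alpha(p)\cup\omega(p)$ is an essential singular point, then connectedness of limit sets plus total disconnectedness forces both to be singletons, and (a) applies. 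Case (c) is then reduced to (b) not by prime ends but by collapsing each component of the complement of the component of $\SS\setminus\Sing(\Phi)$ containing $\varphi(p)$ (the Moore-type quotient of \cite{BJ}), which yields a sphere flow with totally disconnected singular set. None of this substance appears in your proposal.
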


\begin{proof}
In all three cases we must show that if $\Omega$ satisfies (i) and
(ii) in Definition~\ref{proper}, then (ii') holds as well.

Assume that (a) holds. We just prove (the other equality is
analogous) $\Bd\Omega_T^-=T\cup\varphi(-\infty,a)
\cup\varphi(-\infty,b)\cup\alpha(p)$, when we can assume
(otherwise the statement is trivial) $\alpha(p)=\{u\}\neq
\omega(p)$. Then there is a small topological disk $D$ (that is, a
homeomorphic set to the unit disk $\{(x,y)\in \R^2: x^2+y^2\leq
1\}$), neighbouring $u$ (hence not intersecting $\omega(p)$), such
that $\varphi(a)$ and $\varphi(b)$ intersect $D$ at respective
semi-orbits $\varphi(-,a')$, $\varphi(-,b')$; moreover, this can
be done so that one of the arcs in $\Bd D$ joining $a'$ and $b'$
is the closure of a strong transversal $T'$ to $\Omega$. One of
the regions into which $\varphi(-,a')\cup \{u\}\cup\varphi(-,b')$
decomposes $\Inte D$ cannot intersect $\Omega$, and the other one,
call it $\Omega'$, includes $\Omega_{T'}^-$. By the hypothesis on
$\Bd \Omega$, and the fact that $\omega(p)$ does not intersect
$D$, both $\Omega'$ and $\Omega_{T'}^-$ have the same boundary,
hence (because they are connected) $\Omega'=\Omega_{T'}^-$. This
implies the statement.

Now suppose that (b) holds. Because $M$ has zero genus, there is
no loss of generality in assuming that it is a region in $\SS$.
Observe that, since $\varphi(p)$ is almost fine, all non-essential
singular points contained in $\alpha(p)\cup\omega(p)$ must be
horizontal. If $\alpha(p)\cup\omega(p)$ contains a regular point
or an horizontal singular point, then a standard
Poincar\'e-Bendixson argument allows to find a semi-orbit
$\varphi(c,d)$ of $\varphi(p)$, and a transversal joining $c$ and
$d$, whose union is a circle decomposing $\SS$ into two regions,
one including $\alpha(p)$, the other one including $\omega(p)$. By
the compactness of $\{a,b\}\cup T$, there is $t_0$ such that
$\Omega_{\Phi_{-t_0}(T)}^-$ is included in the first region, while
$\Omega_{\Phi_{t_0}(T)}^+$ is included in the second one, which
easily implies that $\varphi(p)$ is fine. In the case when all
points from $\alpha(p)$ and $\omega(p)$ are singular and
essential, total disconnectedness implies finiteness and (a)
applies.

Finally, suppose that (c) is true. If suffices to consider the
case $M=\SS$, as then the case $M=\RR$ follows by passing to its
one-point compactification, which is precisely $\SS$. Moreover, as
in (b), we can additionally assume that all points from
$\alpha(p)\cup \omega(p)$ are singular. Let $S$ denote the set of
singular points of $\Phi$, and let $U$ be the component of
$\SS\setminus S$ including $\varphi(p)$. Next define the
equivalence relation $\sim$ in $\SS$ by $u\sim v$ if $u=v$ or
there is a component $C$ of $\SS\setminus U$ such that $u,v\in C$.
As explained in \cite[p. 481]{BJ}, the quotient space $\SS_\sim$
is homeomorphic to $\SS$ and the flow $\Phi$ collapses, in the
natural way, to a flow $\Phi_\sim$ on $\SS_\sim$, whose set of
singular points is totally disconnected. By applying (b) to the
collapsed flow, we deduce that the distances
$d(\Phi_t(q),\alpha(p))$, $q\in \Omega_T^-$, tend uniformly to
zero as $t\to -\infty$, and the same is true for
$d(\Phi_t(q),\omega(p))$, $q\in \Omega_T^+$ and $t\to \infty$.
Therefore, $\varphi(p)$ is fine.
\end{proof}

After extending the notion of separatrix as described in
Definition~\ref{proper}, Theorem~\ref{mainth} works and, in fact,
can be slightly improved, see Theorem~\ref{elbueno} below. The
improvement has to do with essential singular points. The
left-hand flow $\Phi_7$ from Figure~\ref{ce4} is that associated
(after deformation to clarify the picture outside the unit circle)
to the vector field
$$f_7(x,y)=(1- x^2 - y^2) \left(-(1- x^2 -
y^2) x - y, x - (1- x^2 - y^2) y\right),
$$
having the origin and the unit circle $\mathbb{S}^1$ as its set of
singular points. Consecutive points of the semi-orbit starting at
$(2,0)$ and intersecting the positive $x$-semiaxis (respectively,
negative $x$-semiaxis, positive $y$-semiaxis) are denoted by
$(a_n)_{n=1}^\infty$ (respectively, $(b_n)_{n=1}^\infty$,
$(c_n)_{n=1}^\infty$). To construct $\Phi_8$ we modify, as
indicated in the picture, the semi-orbits from the regions
enclosed by $\varphi_{\Phi_7}(a_n,b_n),
\varphi_{\Phi_7}(a_{n+1},b_{n+1})$, and the segments connecting
$a_n$ and $a_{n+1}$ and $b_n$ and $b_{n+1}$. In the lower
half-plane, and inside $\mathbb{S}^1$, the phase portrait does not
change. Thus, for both flows, all regular orbits  spiral towards
$\mathbb{S}^1$ in positive time, and if $\Gamma_1$ denotes the
orbit passing through $(2,0)$ and $\Gamma_2$ is an orbit inside
$\mathbb{S}^1$, then
$\{\0\}\cup\mathbb{S}^1\cup\Gamma_1\cup\Gamma_2$ is a 
separator configuration for both $\Phi_7$ and $\Phi_8$.

\begin{figure}
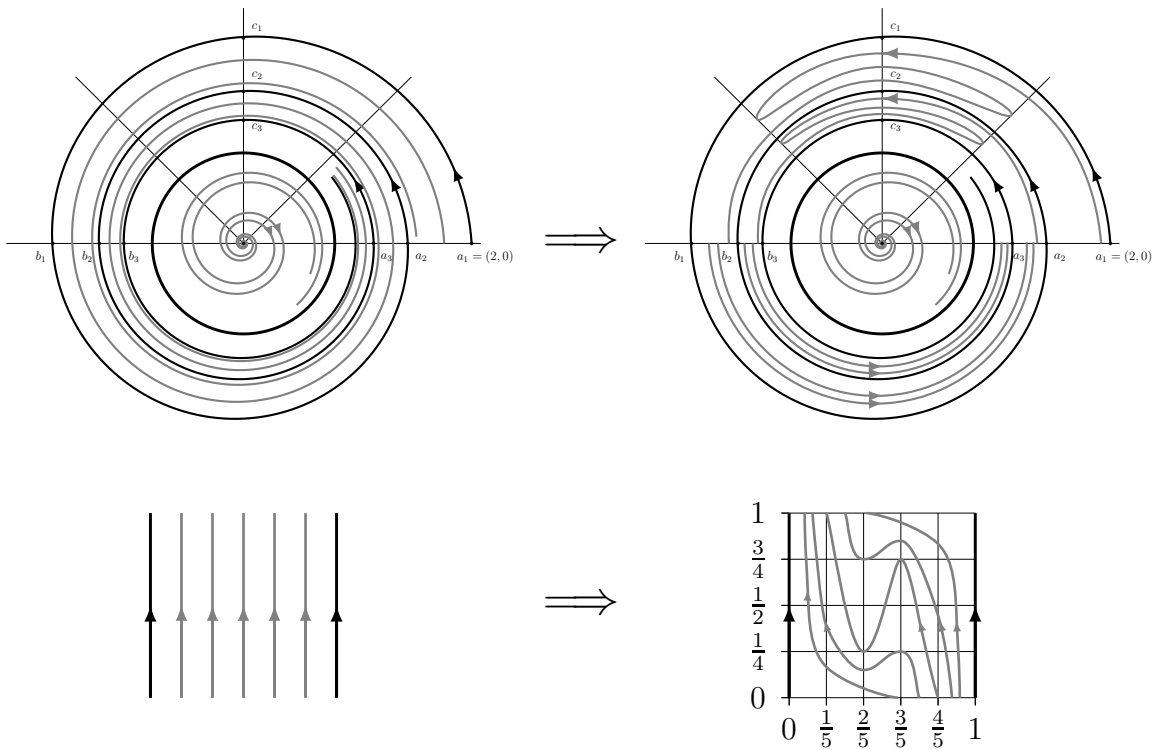

\centering

\caption{The phase portraits of flows $\Phi_7$ (left) and
$\Phi_8$.\label{ce4}}
\end{figure}

Nevertheless, these flows are not equivalent. The key point is
that, while all semi-lines starting from the origin are
transversal (except at the unit circle) to $\Phi_7$, transversals
connecting the points $(c_n)_n$ for $\Phi_8$ cannot be fully
included in the octant $\{(x,y):y>|x|\}$, hence their diameters
are uniformly bounded, from below, by a positive number. Now
assume that $h$ is a topological equivalence mapping the orbits of
$\Phi_8$ onto those of $\Phi_7$. Then the points $h(c_{n})$
converge to a point $d\in \mathbb{S}^1$, and there are
transversals $T_n$ (for $\Phi_7$) connecting the points $h(c_n)$
and $h(c_{n+1})$ whose diameters tend to zero. Hence the diameters
of the transversals $h^{-1}(T_n)$ (for $\Phi_8$) also tend to
zero, which is impossible because they connect the points  $c_n$
and $c_{n+1}$.

There is no contradiction with Theorem~\ref{mainth} here, because
the set of singular points is not discrete (by a \emph{discrete
set} we mean a set consisting of isolated points). On the other
hand, note that all singular points in the octant are essential
for the flow $\Phi_8$, which is \emph{really} the reason why
Theorem~\ref{mainth} fails in this case:

\begin{theorem}
 \label{elbueno}
 Let $M$ be a surface and suppose that
 $\Phi_1$ and $\Phi_2$ are flows on $M$ whose sets of essential
 singular points are discrete. Then $\Phi_1$ and $\Phi_2$ are equivalent if and
 only if  they have equivalent separator
 configurations.
\end{theorem}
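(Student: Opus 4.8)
The plan is to establish the two implications separately; the direct one is formal while the converse carries all the weight. For \emph{only if}, let $h\colon M\to M$ be a topological equivalence of $\Phi_1$ and $\Phi_2$. Since $h$ carries orbits to orbits preserving directions, it preserves $\alpha$- and $\omega$-limit sets, recurrence, parallel regions together with their strip/annular/radial/toral type, strong strips, border orbits and strong transversals; hence by Definition~\ref{proper} it sends fine orbits to fine orbits and therefore separators to separators. Thus $h(\mathcal S_1)=\mathcal S_2$ and $h$ permutes the standard regions. Because inside a parallel region every orbit is an admissible representative, we may take the representatives of $\Phi_2$ to be the $h$-images of those of $\Phi_1$, and then $h$ itself witnesses the equivalence of the separator configurations.

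For \emph{if}, suppose $h_0\colon M\to M$ is a homeomorphism carrying the orbits of $\mathcal S_1^+$ onto those of $\mathcal S_2^+$ and preserving directions; the goal is to modify $h_0$ away from the configuration into a genuine equivalence $h$. First I record the structure of standard regions. Since no orbit of a parallel region is singular, every singular point is a separator; and as $\mathcal S$ is closed, each standard region $U$ is a component of $M\setminus\mathcal S$, so $\Bd U\subset\mathcal S$. By the very shape of Definition~\ref{proper} (radial and toral regions being disallowed) $U$ is either annular or a strong strip, and in the strip case $\Bd U=\gamma_a\cup\gamma_b\cup\alpha(p)\cup\omega(p)$, with border orbits $\gamma_a,\gamma_b$ and limit sets $\alpha(p),\omega(p)$ all lying in $\mathcal S$. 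Consequently $h_0$ sends each standard region $U$ of $\Phi_1$ onto a standard region $U'=h_0(U)$ of $\Phi_2$ of the same type, matching border orbits, limit sets and the representative orbit while respecting directions; and on $\mathcal S_1$ it is already an orbit- and direction-preserving bijection onto $\mathcal S_2$. I keep $h:=h_0$ on $\mathcal S_1$ and rebuild it on each $U$.

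The rebuilding over a standard region is a flow-box interpolation. Choosing a strong complete transversal $T$ for $U$ coordinatises it as a product of a transversal arc with the orbit time (and likewise $U'$); the prescribed values of $h_0$ on $\gamma_a$, $\gamma_b$ and on the representative orbit pin the transversal homeomorphism at three levels and the time reparametrisation along three orbits, which I interpolate monotonically to obtain an orbit- and direction-preserving homeomorphism $h_U\colon U\to U'$ agreeing with $h_0$ on $\gamma_a\cup\gamma_b$ and on the representative (the annular case is analogous and simpler, using the circle coordinate). The crux --- and the principal obstacle --- is continuity of the assembled map along $\mathcal S_1$, precisely where the orbits of $U$ escape to the limit sets. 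Here the separate boundary control~(ii') of a fine orbit is decisive: a compactness argument in the one-point compactification $M_\infty$, of the type carried out in the proof of Proposition~\ref{casi}, shows that (ii') forces $d\bigl(\Phi_1(t,q),\omega_{\Phi_1}(p)\bigr)\to0$ uniformly for $q\in U_T^{+}$ as $t\to+\infty$, and symmetrically onto $\alpha_{\Phi_1}(p)$ as $t\to-\infty$, with the same for $\Phi_2$ on $U'$. Together with the continuity of $h_0$ on $\mathcal S_1$, this uniform collapse lets $h_U$ extend continuously across $\Bd U$ in agreement with $h_0$, so the pieces glue to a continuous bijection $h$ of $M$; being a continuous bijection of the compact space $M_\infty$ (extending by $\infty\mapsto\infty$ when $M$ is noncompact), it is the sought homeomorphism.

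It remains to see where the hypothesis enters, and why discreteness of \emph{all} singular points is unnecessary. The uniform collapse above is local to each region, but the continuous gluing also requires that the \emph{way} the various standard regions abut a common limit set be determined by the configuration. This is exactly what fails in the counterexample $\Phi_8$ of Figure~\ref{ce4}, where the limit circle consists of essential points and the transversals joining the successive points $c_n$ cannot be shrunk: the attachment carries hidden data invisible to the separator configuration. Assuming the essential singular points discrete repairs this: each such point being isolated, the finite sectorial decomposition provides a local normal form, so that only finitely many standard regions meet it and their incidence --- hence the small-diameter behaviour of the connecting transversals --- is forced by the configuration, making the local extension rigid. Horizontal and vertical singular points, by contrast, carry the trivial local models $\Phi_{h_1},\Phi_{h_2},\Phi_{v_1},\Phi_{v_2},\Phi_{v_3}$ and impose no such constraint even when they form continua, which is precisely why they may be left out of the discreteness assumption; this is the improvement of the present theorem over Theorem~\ref{mainth}.
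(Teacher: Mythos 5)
Your ``only if'' direction is fine, but the converse has genuine gaps, beginning with your structural claim that a standard region $U$ must be annular or a strong strip with $\Bd U=\gamma_a\cup\gamma_b\cup\alpha(p)\cup\omega(p)$. Definition~\ref{proper} constrains the parallel neighbourhoods of a \emph{single} fine orbit, not the maximal regions: a standard region is a component of the complement of the separator set, and it can perfectly well be radial (for $M=\RR\setminus\{\0\}$ with $\Phi_r$ every ray is fine, so the unique standard region is the whole radial region), toral ($M=\TT$ with $\Phi_{ss}$), or a strip which is not strong and has no border orbits at all ($M=\RR$ with $\Phi_s$); and even when $U$ is a strip with border orbits, its boundary typically contains further orbits besides them (for the saddle flow $f(x,y)=(x,-y)$ each open quadrant is a standard region whose boundary consists of two semi-axes \emph{and} the origin, while $\alpha(p)=\omega(p)=\emptyset$ for $p$ in the quadrant). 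So the interpolation you propose, pinned at ``the'' two border orbits and the representative, is not even defined in general. This is exactly why the paper's key lemma, Proposition~\ref{keystone}, is stated for strip, annular \emph{and} radial standard regions, why its radial case needs a separate surgery (collapsing one orbit to a singular point to reduce to the strip case), and why the toral case is discarded beforehand ($M=\TT$ and both flows equivalent to $\Phi_{ss}$).

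The deeper problem is that you never formulate, let alone prove, the statement whose absence is the actual gap in Markus' and Neumann's arguments and whose proof is the technical heart of this paper: a transversal inside a standard region ending at an accessible regular or horizontal boundary point is semi-complete, i.e.\ extends to a complete transversal of the region (Proposition~\ref{keystone}). The whole Markus--Neumann extension procedure, including all its boundary-continuity checks, hinges on this, and the counterexamples of Section~\ref{counter} are precisely configurations where the analogous statement fails for canonical regions. Your substitute --- uniform convergence of $U_T^{+}$ onto $\omega(p)$, quoted from the proof of Proposition~\ref{casi} --- does not fill the hole: that uniform statement is obtained in the paper only under the extra hypotheses (a)--(c) of Proposition~\ref{casi} (finite limit sets, zero genus with totally disconnected essential singular set, or $M=\RR,\SS$), whereas Theorem~\ref{elbueno} concerns arbitrary surfaces; moreover, condition (ii') is a property of the small strong strips neighbouring one fine orbit, and you apply it to $U$ itself, which, as noted above, need not be a strong strip or satisfy (ii'). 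Finally, your account of why only \emph{essential} singular points must be discrete invokes the finite sectorial decomposition at isolated singular points; that is a theorem about analytic flows (cf.\ Remark~\ref{menosmal}), not about continuous ones. The paper's reason is different and elementary: every singular point is a separator, vertical singular points are \emph{interior} to $\mathcal{S}^{+}$ (so continuity there is automatic), maximal curves of horizontal singular points can be handled exactly like regular orbits, and the original continuity argument at isolated singular points is then needed only at the essential ones.
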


\begin{remark} If a continuous flow on $M$ possesses a trivial
singular point, then it is clear that its set of essential
singular points cannot be discrete.
\end{remark}

We outline the proof of Theorem~\ref{elbueno} in the next section.
Let us presently emphasize the usefulness of the improved
condition on the singular points. On the one hand, recall that it
implies, in the zero genus case, that all almost fine orbits are
fine (Proposition~\ref{casi}(b)). On the other hand, we have:

\begin{proposition}
 \label{analytic}
 If $(M,\Phi)$ is associated to an
analytic vector field $f$, then either $f$ is identically zero or
its set of essential singular points is discrete.
\end{proposition}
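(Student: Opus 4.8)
The plan is to argue locally and show that every $p\in\Sing(\Phi)$ has a neighbourhood $U$ in which all singular points other than finitely many are vertical or horizontal; since such points are by definition not essential, this exhibits the set $E$ of essential singular points as locally finite, hence closed and discrete. Working in an analytic chart I may assume $M=\RR$ near $p$ and write $f=(P,Q)$ with $P,Q$ real analytic. First I would dispose of trivial singular points: if some singular point admitted a neighbourhood of singular points, then $f$ would vanish on a nonempty open set, and the identity theorem for real analytic maps on the connected manifold $M$ would force $f\equiv\0$, contrary to hypothesis. Thus $\Sing(\Phi)=\{P=0\}\cap\{Q=0\}$ has empty interior, so it is a real analytic set of dimension at most one, whose isolated points form a locally finite set (these are essential but harmless) and whose one-dimensional part has a discrete set of non-manifold points, being a single embedded analytic arc near each of its manifold points and coinciding there with the whole singular set.

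The computational heart is the local type at such a manifold point $q$. Choosing analytic coordinates with the arc equal to $\{y=0\}$ and $q=\0$, the vanishing of $P,Q$ on the arc lets me factor $P=y^{m}\hat P$, $Q=y^{n}\hat Q$ with $\hat P(\cdot,0),\hat Q(\cdot,0)\not\equiv 0$; here $m,n$ are the generic transverse vanishing orders. I would then run a dominant-balance analysis of $\dot x=y^{m}\hat P$, $\dot y=y^{n}\hat Q$ as $y\to0$ at a point where $\hat P(q)\neq0\neq\hat Q(q)$. Comparing $m$ and $n$ shows that for $m\ge n$ the nearby orbits limit onto single points of the arc (approaching transversally), while for $m<n$ they run off parallel to it; tracking the signs of $\dot y$ and $\dot x$ on the two sides $y>0$ and $y<0$ (governed by the parity of $n$, resp. $m$, and the sign of $\hat Q(q)$, resp. $\hat P(q)$) distributes these pictures exactly among the five normal forms: the transversal cases realize $\Phi_{v_1},\Phi_{v_2},\Phi_{v_3}$ and the parallel cases $\Phi_{h_1},\Phi_{h_2}$. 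A flow-box comparison on each side then upgrades this to a genuine local topological equivalence, so $q$ is vertical or horizontal.

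Consequently, on each smooth arc the only possibly essential points are the zeros of the analytic one-variable functions $\hat P(\cdot,0)$ and $\hat Q(\cdot,0)$ (and the discrete loci where $m$ or $n$ jumps), which are discrete. It then remains to prevent these exceptional points, or the discrete non-manifold points, from accumulating \emph{at} $p$. For this I would parametrise each analytic half-branch of $\Sing(\Phi)$ at $p$ by a Puiseux-type analytic arc $\gamma(t)$, $t\in[0,\varepsilon)$, $\gamma(0)=p$, pull $P$ and $Q$ back along a transversal to $\gamma$, and extract their leading transverse coefficients as analytic functions of $t$. Their product $\phi(t)$ is analytic on $[0,\varepsilon)$ and not identically zero (otherwise $P$ or $Q$ would vanish to infinite transverse order along the branch), so its zeros cannot accumulate at $t=0$; after shrinking $\varepsilon$ the open branch carries no essential points. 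Assembling the finitely many branches with the locally finite isolated and non-manifold points produces the desired $U$ with $E\cap U$ finite.

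The main obstacle is precisely this passage to the branch point. Real analytic sets are far less rigid than complex ones, so I must lean on the local conic structure and Puiseux parametrisation of real analytic curve germs both to guarantee that $\Sing(\Phi)$ is a finite union of embedded arcs near $p$ and to ensure that the degeneracy detector $\phi(t)$ extends analytically across $t=0$ with a well-defined, non-vanishing leading coefficient. Controlling the type uniformly up to $p$, rather than merely along the open arcs where the transverse orders $m,n$ are constant, is where the argument is most delicate; everything else is either the explicit dominant-balance bookkeeping above or standard local finiteness for analytic stratifications.
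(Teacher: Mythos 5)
Your proposal follows the same architecture as the paper's proof — identity theorem for the trivial case, Puiseux/local-star structure of the analytic curve $\Sing(\Phi)$, classification of generic curve points as vertical or horizontal, and analyticity along branches to forbid accumulation of essential points — and your dominant-balance classification at manifold points, carried out in coordinates \emph{adapted to the arc}, is correct (the paper packages the same dichotomy through a factorization $f=\rho_p g_p$ with $g_p$ analytic and non-vanishing off $p$, quoted from \cite[Theorem~4.5]{jl07}, so that vertical/horizontal becomes transversality/identical tangency of the singular arc with $g_p$). The genuine gap is in your last step, exactly where you flag delicacy. The functions you propose to control there — the leading transverse coefficients of the \emph{fixed chart components} $P,Q$ along a branch, and their product $\phi$ — do not detect essentiality: the $m$-versus-$n$ analysis is valid only in frames adapted to the curve, and the implication ``$\phi(t)\neq 0$ implies $\gamma(t)$ not essential'' is false in a fixed frame. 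Concretely, take $f(x,y)=(y-x)\bigl(1-(x+y),\,1+(x+y)\bigr)$, whose singular set near $\0$ is the diagonal $\{y=x\}$. Along the normal line through $(x_0,x_0)$ both components vanish to order exactly one, with leading coefficients $\sqrt{2}\,(1\mp 2x_0)$, so $\phi(0)=2\neq 0$; yet $\0$ is essential: in the variables $w=x+y$, $z=y-x$ the regular orbits lie on the parabolas $z=c+w^2/2$, so arbitrarily close to $\0$ there are orbits ($c<0$) limiting onto singular points and orbits ($c>0$) bypassing the diagonal, which is incompatible with the vertical models (every nearby orbit meets the singular arc) as well as with the horizontal ones (no nearby orbit does). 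Hence ``after shrinking $\varepsilon$ the open branch carries no essential points'' does not follow from non-vanishing of your $\phi$, and tangency-type essential points could, for all your argument shows, accumulate at the branch endpoint.

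What must be controlled analytically up to $t=0$ is not the size of the fixed components but the \emph{tangency between the branch and the reduced field}. Write $f(\Psi(t,s))=f_\tau(t,s)\,\tau(t)+f_\nu(t,s)\,\nu(t)$ in the moving tangent-normal frame of the branch; since a Puiseux parametrization satisfies $\gamma'(t)=t^{r}w(t)$ with $w(0)\neq\0$, the frame $(\tau,\nu)$ is analytic on $[0,\varepsilon)$, hence so are the leading transverse coefficients of $f_\tau$ and $f_\nu$, and \emph{their} product is the correct $\phi$: when the normal order is at most the tangential one, the normal leading coefficient is (up to sign) the tangency determinant and its non-vanishing forces verticality; when the tangential order is strictly smaller, non-vanishing of the tangential leading coefficient forces identical tangency nearby, i.e.\ horizontality. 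In the counterexample above $f_\nu$ has leading coefficient $4x_0$, which duly vanishes at the essential point. This repaired function is precisely the paper's $T_p(s)=\lambda_{p,x}'(s)g_{p,y}(\lambda_p(s))-\lambda_{p,y}'(s)g_{p,x}(\lambda_p(s))$, which the paper can write down directly because the factorization $f=\rho_p g_p$ is available; its discreteness and closedness arguments are then exactly your ``an analytic function on $[0,\varepsilon)$ not identically zero has no zeros accumulating at $0$'' step, applied once to $T_p$ and once to the vanishing locus of $g_p$. With that replacement your proof becomes sound; as written, the key implication fails.
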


\begin{proof} If $\Sing(\Phi)$ has non-empty interior, the
analyticity of $f$ and the connectedness of $M$ implies that
$\Sing(\Phi)=M$; we discard this trivial case in the rest of the
proof.

It is a well-known fact that, for every $p \in M$, there exist a
neighbourhood $U_p$ of $p$, an analytic map $\rho_p:U_p \to \R$
and an analytic vector field $g_p$ on $U_p$, such that the
restriction of $f$ to $U_p$ equals $\rho_p g_p$ and the vector
field $g_p$ has no zeros in $U_p \setminus \{p\}$ (see, e.g.,
\cite[Theorem~4.5]{jl07}). If $S_1$ denotes the set of singular
points with the property that any $g_p$ in such a decomposition
vanishes at $p$, then $S_1$ is clearly closed and discrete.

Since $\Sing(\Phi)$ is the set of zeros of the analytic map
$f_x^2+f_y^2$, $f=(f_x,f_y)$, the theory of Puiseux series implies
that $\Sing(\Phi)$ is locally, at each of its points $p$, a
topological star with finitely many branches (``zero'' branches
meaning that the point is isolated in $\Sing(\Phi)$); moreover,
any such branch $B$ can be parametrized via a bijective analytic
map $\varphi:[0,1]\to B$ with $\varphi(0)=p$ (for a proof see,
e.g., ~\cite[Theorem~4.3]{jl07}). We emphasize that $\varphi$ is
analytic in the whole closed interval (that is, it can be
analytically extended to a larger open interval containing
$[0,1]$). Clearly, the set $S_2$ of points where $\Sing(\Phi)$ is
not locally a $2$-star (that is, there is not an arc neighbouring
$p$ in $\Sing(\Phi)$) is also closed and discrete.

To finish the proof, it then suffices to show that if $S_3$ is the
set of essential singular points not included in $S_1\cup S_2$,
then $S_3$ is closed and discrete as well. Let $p\in S_3$ and
assume $U_p$ to be small enough so that it is a tubular
neighbourhood of $p$ for the non-vanishing vector field $g_p$. We
can also assume that there is an analytic bijection
$\lambda_p:(-1,1)\to \Sing(\Phi)\cap U_p$. Since $p$ is not
vertical, $\lambda_p'(0)$ must be parallel to $g_p(p)$, that is,
$T_p(s)=\lambda_{p,x}'(s)g_{p,y}(\lambda_p(s))
-\lambda_{p,y}'(s)g_{p,x}(\lambda_p(s))$ vanishes for $s=0$, and
since $p$ is not horizontal, $T_p(s)$ cannot be identically zero.
Analyticity then implies that there is $\epsilon>0$ such that
$T_p(s)$ does not vanishes at $(-\epsilon,\epsilon)\setminus
\{0\}$, that is, all singular points close enough to $p$ are
vertical. In particular, $S_3$ is discrete.

To prove that $S_3$ is closed, it suffices to show that if
$(p_n)_n$ is a sequence of pairwise distinct points of $S_3$, then
it cannot converge. Assume the opposite and call $p$ its limit,
when we can also assume that all points $p_n$ belong to the same
branch $B$ of the star of singular points with centre $p$ and are
included in the neigbourhood $U_p$.  Find an analytic
parametrization $\varphi:[0,1]\to B$ as previously explained, with
$\varphi(t_n)=p_n$ and $t_n\to 0$, and realize that vectors
$\varphi'(t_n)$ and $g_p(\varphi(t_n))$ are parallel for all $n$.
Hence, $\varphi'(t)$ and $g_p(\varphi(t))$ are parallel for all
$t\in [0,1]$, which is to say that all points $p_n$ are, in fact,
horizontal. This contradiction finishes the proof.
 \end{proof}

\begin{corollary}
 Let $M$ be a surface and suppose that
 $\Phi_1$ and $\Phi_2$ are flows on $M$ associated to
 analytic vector fields. Then $\Phi_1$ and $\Phi_2$ are
equivalent if and only if
 they have equivalent separator
configurations.
\end{corollary}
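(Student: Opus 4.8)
The plan is to obtain the corollary as an immediate consequence of Theorem~\ref{elbueno} and Proposition~\ref{analytic}, after disposing of a degenerate situation. First I would apply Proposition~\ref{analytic} to each of the two fields: for $i\in\{1,2\}$, either $f_i$ is identically zero, or the set of essential singular points of $\Phi_i$ is discrete. The generic case is the one in which both $f_1$ and $f_2$ are not identically zero. Here both flows have discrete sets of essential singular points, so the hypotheses of Theorem~\ref{elbueno} are met and the asserted equivalence (flows equivalent $\iff$ separator configurations equivalent) follows verbatim. Thus, in the non-trivial case, the entire content of the corollary is already packaged in the two quoted results.

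It remains to handle the case where at least one field, say $f_1$, vanishes identically. Then $\Phi_1$ is the flow all of whose orbits are singleton (singular) points; in particular $\Phi_1$ possesses no regular orbit and hence no parallel region, so no orbit can be almost fine (cf.\ Definition~\ref{proper}) and every orbit is a separator. Consequently $\mathcal{S}_1=\mathcal{S}_1^+=M$. I would then split into two subcases. If $f_2\equiv 0$ as well, then $\Phi_2$ is likewise the trivial flow, $\mathcal{S}_2^+=M$, the two flows are trivially equivalent (any homeomorphism of $M$ will do), and the identity witnesses the equivalence of their separator configurations; both sides of the biconditional hold. If, on the other hand, $f_2\not\equiv 0$, then by analyticity $\Sing(\Phi_2)$ has empty interior, so $\Phi_2$ admits a regular (non-singleton) orbit. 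Since a topological equivalence carries singular points to singular points and regular orbits to regular orbits, $\Phi_1$ and $\Phi_2$ cannot be equivalent; and since $\mathcal{S}_1^+=M$ while $\mathcal{S}_2^+$ is a proper subset of $M$, no homeomorphism of $M$ onto itself can carry the orbits of $\mathcal{S}_1^+$ onto those of $\mathcal{S}_2^+$ (its image would have to be the proper subset $\mathcal{S}_2^+$, yet it must be all of $M$). Hence the configurations are not equivalent either, and both sides of the biconditional fail.

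I do not expect a genuine obstacle here: the real work has been carried out in Theorem~\ref{elbueno} and Proposition~\ref{analytic}, and the only point requiring a little care is the bookkeeping in the degenerate case $f_i\equiv 0$ — specifically, checking that in the trivial flow every orbit is a separator (so that $\mathcal{S}^+=M$), and that the mismatch between $\mathcal{S}_1^+=M$ and a proper $\mathcal{S}_2^+$ simultaneously rules out equivalence of the flows and equivalence of their configurations.
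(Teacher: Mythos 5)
Your overall route is exactly the intended one: the paper states this corollary with no proof at all, treating it as an immediate consequence of Proposition~\ref{analytic} (either $f\equiv 0$ or the set of essential singular points is discrete) combined with Theorem~\ref{elbueno}; your non-degenerate case is precisely that. Your decision to treat the case $f_1\equiv 0$ separately is also correct and in fact necessary: for the trivial flow every point is a trivial (hence essential) singular point, so its set of essential singular points is all of $M$ and Theorem~\ref{elbueno} does not apply to it.

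However, one step in your degenerate case is false as stated. From $f_2\not\equiv 0$ you infer that $\mathcal{S}_2^+$ is a \emph{proper} subset of $M$, and you rest the non-equivalence of the configurations on that. This inference fails: take $M=\TT$, $f_1\equiv 0$ and $f_2$ a constant vector field $(1,\alpha)$ with $\alpha$ irrational, which is analytic and nonvanishing. The resulting flow is minimal, so it admits no proper nonempty invariant open set; hence no orbit is neighboured by an annular region or a strong strip, every orbit is a separator, and $\mathcal{S}_2^+=\mathcal{S}_2=M$ (the paper makes the same observation for separatrices in Remark~\ref{menosmal}, and the reasoning applies verbatim to separators). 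The conclusion you want is nevertheless true, and the repair is short: every orbit of $\mathcal{S}_1^+$ is a singleton, so under any homeomorphism $h$ realizing an equivalence of configurations each such orbit is carried onto a singleton orbit of $\mathcal{S}_2^+$, that is, onto a singular point of $\Phi_2$; since $\mathcal{S}_1^+=M$ and $h$ is onto, this gives $M=h(M)\subseteq \Sing(\Phi_2)$, contradicting the fact (by analyticity and connectedness, as in the first paragraph of the proof of Proposition~\ref{analytic}) that $\Sing(\Phi_2)$ has empty interior when $f_2\not\equiv 0$. With this substitution your argument is complete.
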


\section{Why the proof of Theorem~\ref{mainth} fails,
and how to
prove Theorem~\ref{elbueno}\label{amend}}

Roughly speaking, the proof of Theorem~\ref{mainth} by Markus and
Neumann goes as follows. First of all, it is shown that each
canonical region for a flow $(M, \Phi)$ is parallel. (The same
reasoning still works, word by word, for standard regions;
alternatively, notice that each invariant region in a parallel
region is parallel as well.) Here observe that, by a simple
connectedness argument,  all orbits in a canonical (or a standard)
region $\Omega$  share their $\alpha$-limit sets and their
$\omega$-limit sets. Thus it make sense to write $\alpha(\Omega)$
and $\omega(\Omega)$, respectively, to denote them.

Next, under the hypotheses of Theorem~\ref{mainth} for
$(M,\Phi_1)$ and $(M,\Phi_2)$, an easy simplification allows to
assume that both separatrix configurations are equal,
$\mathcal{S}^+:=\mathcal{S}_1^+=\mathcal{S}_2^+$, hence the
canonical regions of $(M,\Phi_1)$ and $(M,\Phi_2)$ are also equal
and the topological equivalence $h:M\to M$ we are looking for
should map each canonical region into itself. Note that the
existence of a toral canonical region implies that
$M=\mathbb{T}^2$; this trivial case can be discarded, for then
both $(\mathbb{T}^2,\Phi_1)$ and $(\mathbb{T}^2,\Phi_2)$ are
equivalent to the rational flow $(\mathbb{T}^2,\Phi_{ss})$.

Now the difficult part of the proof comes (Section~3 in \cite{Ne}
and Section~7 in \cite{Ma}): starting from assuming that $h$ is
the identity on $\mathcal{S}^+$, it must be homeomorphically
extended to each canonical region $\Omega$ (mapping orbits from
$(M,\Phi_1)$ into orbits from $(M,\Phi_2)$  and preserving the
directions). After explaining how this extension must be done, the
authors first check the continuity from ``inside'' at the
so-called accesible regular points from $\Bd \Omega$ (by
\emph{accesible} we mean that there is a lateral tubular region at
the point which is included in $\Omega$), then deduce the
continuity from ``outside'' and at the rest of regular points in
$\mathcal{S}^+$, and finally prove the continuity at the
(isolated) singular points. If fact, the argument equally works
under the weaker hypothesis that the sets of essential singular
points are discrete. Continuity at vertical singular points is
guaranteed from the very beginning, because they are interior to
$\mathcal{S}^+$; on the other hand, maximal curves of horizontal
singular points can be dealt with exactly as if they were regular
orbits.

Unfortunately, in their construction Markus and Neumann take for
granted the following intuitively obvious (but, as shown by the
counterexamples from the previous section, not necessarily true)
fact: if a transversal to a canonical region ends at an accesible
point from its boundary, then the transversal must be
semi-complete. Using standard regions allows to override this
difficulty:

\begin{proposition}\label{keystone}
 Let $\Omega$ be a strip, annular or
radial standard region. If $p\in \Bd \Omega$ is a regular or a
horizontal singular point, and $L\subset \Omega$ is a transversal
ending at $p$ (that is, there is an arc $A$ with endpoint $p$ such
that $A'=A\setminus \{p\}\subset L$), then $L$ is semi-complete;
more precisely, there is a complete transversal to $\Omega$
including $A'$.
\end{proposition}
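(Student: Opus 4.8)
The plan is to reduce everything to the strip case and then to identify $\varphi(p)$ as a genuine border orbit of $\Omega$, which is exactly the point at which the original Markus--Neumann argument silently breaks down. First I would dispose of the annular and radial cases by standard manipulations: an annular standard region becomes a strip after cutting along a complete transversal, while the radial case is the genuinely different one and will be treated by the structural input below, keeping track of the ``infinitely many crossings'' clause in the definition of complete transversal. So assume $\Omega$ is a strip. Since $p$ is regular or horizontal singular, fix a (horizontal) flow box at $p$; because the arc $A$ reaches $p$ transversally from inside $\Omega$, exactly one of the two lateral tubular regions at $p$, say $W$, is contained in $\Omega$, and the corresponding lateral transversal $T\subset\Omega$ at $p$ may be shrunk so that $T\subset L$. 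Every orbit of $\Omega$ meeting $W$ then crosses $T$ exactly once.

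The structural heart of the argument is the observation that every orbit of $\Omega$ is fine: indeed $\Omega$, being a component of the complement of the separators, contains no separator, and a non-separator is fine by definition. Picking $q\in T$ close to $p$ and applying fineness of $\varphi(q)$, I obtain a strong strip $\Omega_q\subset\Omega$ neighbouring $\varphi(q)$, with border orbits and, through condition (ii') of Definition~\ref{proper}, the clean splitting of the limit sets: after deleting a strong transversal, $\alpha(q)$ meets only the backward semi-strip and $\omega(q)$ only the forward one. Combining this with condition (ii), the regularity (or horizontality) of $p$, and the fact that $\alpha(\Omega)$ and $\omega(\Omega)$ are approached longitudinally rather than across orbits, I conclude that $\varphi(p)$ is a border orbit of $\Omega$; equivalently, near $\varphi(p)$ the region $\Omega$ is a half-open strong strip with $\varphi(p)$ as one border. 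This is precisely the step that is false for canonical regions --- the counterexamples of Section~\ref{counter} exhibit accessible regular orbits (such as $\Gamma_1$ in Figure~\ref{ce3}) sitting inside $\alpha(\Omega)$ --- and it is repaired exactly by passing to standard regions via the finer notion of separator.

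With $\varphi(p)$ recognised as a border orbit, I finish in the closed parallel model. Realise a neighbourhood of $\varphi(p)$ in $\Cl\Omega$ as the top of a strong strip $\R\times[-1,1]$, with $\varphi(p)\leftrightarrow\R\times\{1\}$ and the orbits of $\Omega$ corresponding to the levels $y\in(-1,1)$. Transversality forces the $y$-coordinate to be strictly monotone along $L$, so $L$ is a graph $x=g(y)$ over a $y$-interval whose supremum is the top border, attained in the limit as $A$ approaches $p$. One then continues $g$ downward across all lower levels --- always possible inside a parallel region --- to a graph over the whole of $(-1,1)$, yielding a complete transversal $C$ that contains $A'$. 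Finally $L$ is either all of $C$ (when $L$ already crosses every orbit) or the upper piece into which some point of $C$ splits it, so $L$ is semi-complete, and in both cases $A'\subset C$, which is the ``more precisely'' assertion.

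The main obstacle is the middle paragraph: proving that an accessible regular or horizontal singular boundary point must lie on a border orbit rather than on a limit-set orbit. This is exactly what the original proof took for granted, and making it rigorous requires a careful use of condition (ii') to forbid the spurious-orbit phenomenon of the counterexamples; everything else is bookkeeping. The radial case carries the additional, purely technical, burden of checking the two-sided infinitely-many-crossings condition for the constructed spiral transversal.
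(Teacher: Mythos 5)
Your proposal reduces the entire proposition to the single claim in your middle paragraph: that an accessible regular or horizontal boundary point of a standard region must lie on a \emph{border orbit}, so that near $\varphi(p)$ the region $\Omega$ is a half-open strip. You never prove this claim --- you yourself flag it as ``the main obstacle'' whose rigorous treatment ``requires a careful use of condition (ii')'' --- and since everything else in your outline really is bookkeeping, what you have is a reduction of the proposition to an unproved statement, not a proof. Worse, that statement is \emph{false} exactly in the radial case, which you route through it. Take the radial standard region $\Omega$ formed by the orbits spiralling towards a limit cycle $C$ from inside (origin singular, $C$ periodic, everything in between spiralling: all these orbits are fine, so $\Omega$ is the punctured open disk); every $p\in C$ is a regular boundary point accessible by a transversal from $\Omega$, yet $\varphi(p)=C$ lies in $\omega(\Omega)$ and is in no sense a border orbit: $\Omega$ accumulates on it by wrapping around, not sideways. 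The same happens for the flow $\Phi_5$ of Figure~\ref{ce3}, whose (radial) standard region has the regular orbit $\Gamma_1$ inside $\alpha(\Omega)$ even after passing from separatrices to separators. The proposition is nevertheless true in these situations, but for a completely different reason: a short transversal ending at $p$ already meets every orbit infinitely often and prolongs to a doubly infinite spiral. So ``accessible $\Rightarrow$ border orbit'' cannot be the mechanism, and your plan for the radial case collapses; passing to standard regions does \emph{not} repair this phenomenon, contrary to what your second paragraph asserts. The paper handles the radial case by an idea absent from your proposal: if $A'$ misses some orbit $\varphi_\Phi(z)$, pass to a modified flow $\Phi'$ having $z$ as a singular point, note that $\Omega\setminus\varphi_\Phi(z)$ is then a \emph{strip} standard region for $\Phi'$, and apply the strip case to reach a contradiction.

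Even in the strip case, the paper does not establish (and does not need) your border-orbit claim, whose validity on an arbitrary surface is unclear; it argues by contradiction instead. Writing the model equivalence $h:\Omega\to\R^2$ and $h(L)=\{(\mu(s),s):s\in(c,d)\}$, one assumes $d<\infty$; then $\mu(s)\to\pm\infty$; then --- and this is precisely where condition (ii') of Definition~\ref{proper} replaces the step Markus and Neumann took for granted --- the closure of a forward semi-strip around the orbit $v=h^{-1}(0,d)$ can contain, outside $\Omega$, only points of $\omega(\Omega)$, which forces $p\in\omega(\Omega)$; finally, the half-transversal $T'$ through $q$ is split into the two sets of points whose forward orbits first meet $A'$ respectively a lateral transversal $B'$ at $p$ on the other side, and these sets are disjoint, nonempty, open and cover $T'$, contradicting connectedness. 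Note that nothing here identifies $\varphi(p)$ as a border orbit: the argument only rules out the configuration in which $L$ stops short of the end of the orbit space, which is all the conclusion requires. (A smaller inaccuracy in your setup: ``exactly one of the two lateral tubular regions at $p$ is contained in $\Omega$'' is unjustified --- both may meet $\Omega$; what one can arrange is that the lateral region containing the tail of $A'$ lies in $\Omega$.) In short, your annular reduction and the final graph-extension paragraph are fine, but the core of the proposition is missing, and the radial case needs the puncturing trick (or something equivalent) that your outline does not contain.
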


\begin{proof}
When $\Omega$ is annular, the result is clear.

Assume now that $\Omega$ is a strip and fix a topological
equivalence $h$ between $(\Omega,\Phi)$ and $(\R^2,\Phi_s)$,  when
there is no loss of generality in assuming that $h$ preserves
directions (that is, if $T=h^{-1}(\{0\}\times \mathbb{R})$, then
$\Omega_T^+=h^{-1}((0,\infty)\times \mathbb{R})$) and
$h^{-1}(0,0)=q$ is the other endpoint of the arc $A$.

Let $I=(c,d)$ ($-\infty\leq c<0<d\leq \infty$) be the open
interval and $\mu:I\to \mathbb{R}$ be the continuous map such that
$h(L)=\{(\mu(s),s):s\in I\}$ when, say,  $\lim_{s\to d}
h^{-1}((\mu(s),s))=p$. We argue to a contradiction by assuming
that $d<\infty$.

We claim that either $\lim_{s\to d} \mu(s)=\infty$  or $\lim_{s\to
d} \mu(s)=-\infty$. Otherwise, there would be a sequence $s_n\to
d$ with $\mu(s_n)\to r\in \R$, hence $h^{-1}((s_n,\mu(s_n))$ would
converge both to $h^{-1}(d,r)$, a point in $\Omega$, and to $p$,
which belongs to $\Bd \Omega$. This is impossible.

We suppose, for instance, $\lim_{s\to d} \mu(s)=\infty$. Moveover,
slightly modifying $h$ near $q$ if necessary, we can assume
$\mu(s)>0$ for all $s\in (0,d)$. Hence $T'=h^{-1}(\{0\}\times
(0,\infty))$ does not intersect $A'$.

Let $v=h^{-1}(0,d)$. The orbit $\varphi(v)$ is fine, so there is a
strong strip $S \subset \Omega$ neighbouring it, with its border
orbits also included in $\Omega$, verifying
Definition~\ref{proper}(ii'). Since the points in  $\Cl(S_{T\cap
S}^+)$ which are not in $\Omega$ belong to
$\omega(\Omega)=\omega(v)$, and $p$ is one of such points because
$h^{-1}(\mu(s),s)$ is included in $S$ if $s$ is close enough to
$d$, we get $p\in \omega(\Omega)$.

Fix now a couple of lateral tubular regions $V$ and $W$ at $p$. We
can assume that $V\subset \Omega$ and, moreover, $A \subset V$.
Let $B$ be the corresponding lateral transversal at $p$ included
in $W$. Since $p\in \omega(\Omega)$, all positive semi-orbits
$\varphi(z,+)$, $z\in T$,  must intersect $B$ (in fact, infinitely
many times). Let $q^*$ be the first point from $\varphi(q,+)$ in
$B$, and denote by $B'\subset B$ the transversal with endpoints
$p$ and $q^*$. Now let $T_0'$ (respectively, $T_1'$) be the set of
points $z\in T'$ such that the first intersection point of
$\varphi(z,+)$ with $A'\cup B'$ belongs to $A'$ (respectively, to
$B'$). Both sets are disjoint and non-empty ($v\in T_1'$, and all
points from $T'\cap V$, in particular those close enough to $q$,
belong to $T_0'$), its union is the whole $T'$, and they are
clearly open in $T'$ because the orbit $\varphi(q)$ does not
intersects $T'$. This contradicts the connectedness of $T'$.

Finally, we assume that $\Omega$ is radial and reason again by way
of contradiction, assuming that $A'$ does not intersect all orbits
of $\Omega$ infinitely many times. It is clear that, without loss
of generality, we can suppose that $A'$ does not meet every single
orbit in $\Omega$; with more detail, there is no restriction in
assuming that there exist some $z\in \Omega$ and some strip
neighbourhood of $\varphi_\Phi(z)$, $S$, such that $S \cap A' =
\emptyset$.

Now consider a new flow $\Phi'$ having exactly the same orbits as
$\Phi$ in $M\setminus \varphi_\Phi(z)$ and having $z$ as a
singular point. Then $\varphi_\Phi(z)$, when seen as a subset of
$(M,\Phi')$, consists of three separators for $\Phi'$: the
singular point $z$ and two regular orbits given by the components
of $\varphi_\Phi(z) \setminus\{z\}$. Moreover,
$\Omega'=\Omega\setminus\varphi_\Phi(z)$ is a strip and, clearly,
a standard region for $\Phi'$. The previous argument implies that
$A'$ is semi-complete for $\Phi'$, which is impossible because it
does not intersect $S$.
\end{proof}

With the help of Proposition~\ref{keystone}, Theorem~\ref{elbueno}
can be proved, without further changes, as explained above.

\section*{Acknowledgements}
This work has been partially supported by Ministerio de
Econom\'{\i}a y Competitividad, Spain, grant MTM2014-52920-P. The
first author is also supported by Fundaci\'on S\'eneca by means of
the program ``Contratos Predoctorales de Formaci\'on del Personal
Investigador'', grant 18910/FPI/13.

\end{document}